\newtheorem{theorem}{Theorem}
\newtheorem{lemma}{Lemma}
\title{Complex dynamics of a predator–prey model with constant-yield prey harvesting  and  Allee effect in predator } 
\author{
 Jianhang Xie \\
 College of Mathematics and Statistics\\
  Chongqing University\\
  \texttt{202206021047t@cqu.edu.cn} \\
   \And
 Changrong Zhu \\
 College of Mathematics and Statistics\\
   Chongqing University\\
  \texttt{zhuchangrong795@sohu.com} \\
  \And
}
\begin{document}
\maketitle
\begin{abstract}
This paper investigates the dynamical behaviors of a Holling type I Leslie-Gower predator-prey model where the predator exhibits an Allee effect and is subjected to constant harvesting. The model demonstrates three types of equilibrium points under different parameter conditions, which could be either stable or unstable nodes (foci), saddle nodes, weak centers, or cusps. The system exhibits a saddle-node bifurcation near the saddle-node point and a Hopf bifurcation near the weak center. By calculating the first Lyapunov coefficient, the conditions for the occurrence of both supercritical and subcritical Hopf bifurcations are derived. Finally, it is proven that when the predator growth rate and the prey capture coefficient vary within a specific small neighborhood, the system undergoes a codimension-2 Bogdanov-Takens bifurcation near the cusp point.
\end{abstract}


\section{Introduction}
The Allee effect is a significant concept in ecology, named after American ecologist Warder C. Allee, who first described this phenomenon in the 1930 s. In predator-prey models, the Allee effect is typically represented by modifying the growth function. The most common approach involves introducing a multiplicative factor \cite{005,006},. Using this method to incorporate the Allee effect, the equation for a single species can be expressed in the following form:
$$\dot{x}=r\left(1-\frac{x}{K}\right)(x-m)x.$$
Mena et al. \cite{007} refined the Leslie-Gower model by incorporating the impact of the Allee effect on prey, resulting in the following formulation:
\begin{equation}\label{31}
    \begin{cases}\frac{\mathrm{d}x}{\mathrm{d}t}=\left(r\left(1-\frac{x}{K}\right)(x-m)-qy\right)x,\\\frac{\mathrm{d}y}{\mathrm{d}t}=s\left(1-\frac{y}{nx}\right)y.&\end{cases}
\end{equation}

Hunting can play a positive role in maintaining ecological balance, especially in specific situations where scientifically managed hunting is used to regulate population sizes and promote the stability of ecosystems.Lan and Zhu \cite{046} introduced constant harvesting of prey into the Leslie-Gower predator-prey model, resulting in the following formulation:
$$\begin{cases}\frac{\mathrm{d}x}{\mathrm{d}t}=rx\left(1-\frac{x}{K}\right)-qxy-h,\\\frac{\mathrm{d}y}{\mathrm{d}t}=sy\left(1-\frac{y}{nx}\right).&\end{cases}$$
\noindent Here, $h$ represents the harvesting coefficient.
Based on the aforementioned system,  Xue Lamei  incorporated an $x-m$ form of the Allee effect into the system, resulting in the following model:
\begin{equation}\label{32}
    \begin{cases}\frac{\mathrm{d}x}{\mathrm{d}t}=rx\left(1-\frac{x}{K}\right)(x-m)-qxy-h,\\\frac{\mathrm{d}y}{\mathrm{d}t}=sy\left(1-\frac{y}{nx}\right).&\end{cases}
\end{equation}

Xue Lamei explored the effects of the Allee effect and constant harvesting on the existence, types, and stability of equilibrium points through qualitative analysis. It was discovered that various types of equilibrium points could arise with changes in parameters, including a cusp of codimension three under specific conditions. Further investigation into bifurcation phenomena under different parameter conditions revealed that the system might exhibit saddle-node bifurcations, both subcritical and supercritical Hopf bifurcations, as well as Bogdanov-Takens bifurcations of codimension two and three.

In previous studies on predator-prey models, researchers have conducted extensive and in-depth investigations into Leslie-Gower models with the Allee effect present in prey populations. However, it is important to note that the Allee effect is not limited to prey populations—it is also commonly observed in predator populations. For instance, predators such as wolves or lions rely on group cooperation to accomplish hunting tasks \cite{049}.Building upon the aforementioned research, this paper proposes a Holling type I Leslie-Gower predator-prey model, where the predator exhibits an Allee effect and is subjected to constant harvesting:
\begin{equation}\label{5.1}
 \begin{cases}\frac{\mathrm{d}x}{\mathrm{d}t}=rx\left(1-\frac{x}{K}\right)-qxy-h,\\\frac{\mathrm{d}y}{\mathrm{d}t}=sy\left(1-\frac{y}{bx}\right)(y-m),&\end{cases}
\end{equation}
 Here, $x$ and $y$ represent the population densities of prey and predator, respectively. The parameter $r$ denotes the intrinsic growth rate of the prey, $K$ is the environmental carrying capacity for the prey, $h$ signifies the intensity of constant harvesting, $q$ is the predation rate, $b$ indicates the proportional coefficient between the predator's carrying capacity and the prey population size, $s$ represents the growth rate of the predator, and $m$ is the threshold for the $\mathrm{Allee}$ effect. All parameters $r$, $K$, $m$, $q$, $h$, and $s$ are positive.

 The following transformations are applied to the variables and parameters in system (\ref{5.1}):

 $$\tilde{x}=\frac{x}{K},\tilde{y}=\frac{y}{bK},\tau=rt,\tilde{m}=\frac{m}{bK},\tilde{q}=\frac{bqK}{r},\tilde{s}=\frac{s}{rbK},\tilde{h}=\frac{h}{Kr}.$$

 For ease of analysis, the system will still be represented using $x, y, t, m, s, h$, and system (\ref{5.1}) is simplified as follows:

\begin{equation}\label{5.2}
\begin{cases}\frac{\mathrm{d}x}{\mathrm{d}t}=x\left(1-x\right)-qxy-h,\\\frac{\mathrm{d}y}{\mathrm{d}t}=sy\left(1-\frac{y}{x}\right)(y-m),&\end{cases}\end{equation}

where $0<m<1$ .

\section{The existence and stability of equilibrium points}

\subsection{The existence of equilibrium points}

Considering the biological significance of the variables in system (\ref{5.2}), the existence interval of the equilibrium point $(x,y)$ is: 
$$\Omega=\{(x,y)\in\mathbb{R}^2|x>0,y\geq0\}=\mathbb{R}^+\times\mathbb{R}_0^+,$$

Setting the right-hand side of the equations in system (\ref{5.2}) to zero yields the system of equations:
\begin{equation}\label{5.3} \begin{cases}x\left(1-x\right)-qxy-h=0,\\sy\left(1-\frac{y}{x}\right)(x-m)=0.&\end{cases} \end{equation}
 The equilibrium points of system (\ref{5.2}) are the solutions to the system of equations (\ref{5.3}).
\begin{theorem}
   When \(h > \frac{1}{4}\), the system has no boundary equilibrium points; when \(h = \frac{1}{4}\), system (\ref{5.2}) has a boundary equilibrium point \(E_1=(x_1,y_1)=(\frac{1}{2},0)\); when \(0 < h < \frac{1}{4}\), system (\ref{5.2}) has boundary equilibrium points \(E_2=(x_2,y_2)=(\frac{1+\sqrt{1-4h}}{2},0)\) and \(E_3=(x_3,y_3)=(\frac{1-\sqrt{1-4h}}{2},0)\).
\end{theorem}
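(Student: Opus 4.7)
The plan is to search for equilibria on the boundary of the biologically meaningful region $\Omega$. Since $\Omega$ requires $x>0$, the relevant boundary is the ray $y=0,\ x>0$. Substituting $y=0$ into the second equation of (\ref{5.3}) makes it vanish identically, so the condition reduces entirely to the first equation.

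First, I would set $y=0$ in $x(1-x)-qxy-h=0$, obtaining the scalar quadratic
\begin{equation*}
x^{2}-x+h=0,
\end{equation*}
whose roots are $x=\tfrac{1\pm\sqrt{1-4h}}{2}$. The entire theorem then follows from a discriminant analysis with respect to the sign of $1-4h$: when $h>\tfrac{1}{4}$ there are no real roots and hence no boundary equilibria; when $h=\tfrac{1}{4}$ there is a unique double root $x=\tfrac{1}{2}$, giving $E_{1}$; and when $0<h<\tfrac{1}{4}$ the two roots are real and distinct, giving the candidates for $E_{2}$ and $E_{3}$.

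Next, I would verify that these candidates actually lie in $\Omega$, i.e.\ that both roots are strictly positive. Since $0<h<\tfrac{1}{4}$ implies $0<\sqrt{1-4h}<1$, both $1+\sqrt{1-4h}$ and $1-\sqrt{1-4h}$ are positive, so $x_{2},x_{3}>0$; combined with $y_{2}=y_{3}=0$, the points $E_{2}$ and $E_{3}$ belong to $\Omega$. The same check applied to $h=\tfrac{1}{4}$ gives $x_{1}=\tfrac{1}{2}>0$, confirming $E_{1}\in\Omega$.

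There is essentially no obstacle here; the argument is a direct application of the quadratic formula together with a sign check. The only subtlety worth stating explicitly is that the second equation of (\ref{5.3}) is automatically satisfied when $y=0$ (even though the factor $1-y/x$ is nominally undefined at $x=0$, the constraint $x>0$ inherited from $\Omega$ removes this ambiguity), so the count of boundary equilibria is controlled purely by the quadratic in $x$.
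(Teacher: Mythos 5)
Your proposal is correct and follows essentially the same route as the paper: both reduce the boundary equilibrium problem to the quadratic $x^{2}-x+h=0$ obtained by setting $y=0$ in the first equation, and then classify by the sign of the discriminant $1-4h$. Your explicit verification that the roots are positive (and the remark about the factor $1-y/x$ being harmless since $x>0$) is a minor addition of rigor over the paper's version, which simply asserts positivity.
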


\begin{proof}
  Solving the second equation in system (\ref{5.3}) yields \(y^*=0, y^{**}=m, y^{***}=x_3\). Substituting \(y^{*}\) into the first equation of system (\ref{5.3}) gives:
  \begin{equation}\label{5.4}
    x^2-x+h=0.
  \end{equation}
  When \(h>\frac{1}{4}\), equation (\ref{5.4}) has no positive real roots; when \(h=\frac{1}{4}\), equation (\ref{5.4}) has a double root \(x_1=\frac{1}{2}\); when \(0<h<\frac{1}{4}\), equation (\ref{5.4}) has two distinct positive real roots \(x_2=\frac{1+\sqrt{1-4h}}{2}, x_3=\frac{1-\sqrt{1-4h}}{2}\).
\end{proof}

\begin{theorem}
   Let \(A=1-qm\), \(\Delta_{1}=B^2=\left(1-qm\right)^{2}-4h\). When \(A\le0\) or \(\Delta_{1}<0\), the system has no equilibrium point with \(y=m\); when \(A>0\) and \(\Delta_{1}=0\), system (\ref{5.2}) has an interior equilibrium point \(E_4=(x_4,y_4)=(\frac{A}{2},m)\); when \(A>0\) and \(\Delta_{1}>0\), system (\ref{5.2}) has interior equilibrium points \(E_5=(x_5,y_5)=(\frac{A+B}{2},m)\) and \(E_6=(x_6,y_6)=(\frac{A-B}{2},m)\).
\end{theorem}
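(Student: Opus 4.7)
The plan is to substitute the candidate value $y=m$ into the first equation of (\ref{5.3}) and then perform a sign/root analysis on the resulting one-variable equation. Substituting $y=m$ into $x(1-x)-qxy-h=0$ and rearranging gives the quadratic
\begin{equation*}
x^{2}-(1-qm)x+h=x^{2}-Ax+h=0,
\end{equation*}
whose discriminant is exactly $A^{2}-4h=(1-qm)^{2}-4h=\Delta_{1}$. Thus the $y=m$ equilibria of (\ref{5.2}) are in one-to-one correspondence with the positive real roots of this quadratic, and the problem reduces to deciding, in terms of $A$ and $\Delta_{1}$, how many such positive roots exist.

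Next I would carry out the case analysis using the quadratic formula and Vieta's relations. The two (possibly complex) roots are $(A\pm\sqrt{\Delta_{1}})/2$; their sum is $A$ and their product is $h>0$. From $h>0$ I conclude that the two roots, when real, always have the same sign, so their common sign is determined by the sign of $A$. This immediately gives the three cases of the theorem: (i) if $\Delta_{1}<0$ the roots are non-real so there is no admissible equilibrium; (ii) if $A\le 0$ then either $\Delta_{1}<0$ or, when $\Delta_{1}\ge 0$, both real roots are non-positive (sum $\le 0$, product $>0$), so again no equilibrium lies in $\Omega$; (iii) if $A>0$ and $\Delta_{1}=0$ there is a single positive double root $x_{4}=A/2$, producing $E_{4}=(A/2,m)$; (iv) if $A>0$ and $\Delta_{1}>0$ both roots $(A\pm B)/2$ are positive, producing $E_{5}$ and $E_{6}$.

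The only place any care is needed is the subcase $A\le 0$ with $\Delta_{1}\ge 0$: one must rule out that $(A+B)/2$ could be positive. This is handled cleanly by the product-of-roots argument, since $h>0$ forces the two real roots to share a sign and the sum $A\le 0$ then forces that sign to be non-positive. There are no further obstacles; the remainder is just reading off the closed-form expressions for $x_{4},x_{5},x_{6}$ from the quadratic formula and pairing them with $y=m$ to obtain the interior equilibria $E_{4},E_{5},E_{6}$ as stated.
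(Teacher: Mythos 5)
Your proposal is correct and follows essentially the same route as the paper: substitute $y=m$ into the first equation of (\ref{5.3}) to obtain the quadratic $x^{2}-Ax+h=0$ and classify its positive roots by the signs of $A$ and $\Delta_{1}$. The only difference is that you make explicit, via Vieta's relations, why $A\le 0$ excludes positive roots even when $\Delta_{1}\ge 0$ — a detail the paper asserts without justification.
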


\begin{proof}
  Substituting \(y^{**}=m\) into the first equation of system (\ref{5.3}) gives:
  \begin{equation}\label{5.5}
    x^2-(1-qm)x+h=0.
  \end{equation}
  When \(A\le0\) or \(\Delta_{1}<0\), equation (\ref{5.5}) has no positive real roots; when \(A>0\) and \(\Delta_{1}=0\), equation (\ref{5.5}) has a double root \(x_4=\frac{A}{2}\); when \(A>0\) and \(\Delta_{1}>0\), equation (\ref{5.5}) has two distinct positive real roots \(x_5=\frac{A+B}{2}, x_6=\frac{A-B}{2}\).
\end{proof}

\begin{theorem}
   Let \(C=\frac{1}{q+1}\), \(\Delta_{2}=D^2=\left(\frac{1}{q+1}\right)^{2}-\frac{4h}{q+1}\). When \(\Delta_{2}<0\), the system has no equilibrium point with \(y=x\); when \(\Delta_{2}=0\), system (\ref{5.2}) has an interior equilibrium point \(E_7=(x_7,y_7)=(2h,2h)\); when \(\Delta_{2}>0\), system (\ref{5.2}) has interior equilibrium points \(E_8=(x_8,y_8)=(\frac{C+D}{2},\frac{C+D}{2})\) and \(E_9=(x_9,y_9)=(\frac{C-D}{2},\frac{C-D}{2})\).
\end{theorem}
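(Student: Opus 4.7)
The plan is to mirror the strategy of Theorems 1 and 2: from the factored second equation in (\ref{5.3}), the branch not yet exploited corresponds to $y = x$ (coming from $1 - y/x = 0$). Substituting this relation into the first equation of (\ref{5.3}) collapses the interior-equilibrium question for this branch into a single quadratic in $x$, and the theorem then follows by reading off the sign of its discriminant.

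Carrying out the substitution, $x(1-x) - qx \cdot x - h = 0$ rearranges to $(q+1)x^{2} - x + h = 0$. Dividing by $q+1>0$ yields $x^{2} - C x + \frac{h}{q+1} = 0$, whose discriminant is exactly $\Delta_{2} = C^{2} - \frac{4h}{q+1}$. This immediately produces the three cases: no real roots when $\Delta_{2} < 0$, a double root $x = C/2$ when $\Delta_{2} = 0$, and two distinct real roots $x = (C \pm D)/2$ when $\Delta_{2} > 0$, giving the candidate pairs $(x,x)$ listed in the statement.

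For the double-root case I would simplify using $\Delta_{2} = 0$, i.e.\ $C^{2} = 4h/(q+1) = 4hC$, which forces $C = 4h$ and hence $x = C/2 = 2h$; this reconciles the general form $(C/2, C/2)$ with the stated $E_{7} = (2h, 2h)$. For the two-distinct-roots case I would confirm positivity via Vieta: the sum $C = 1/(q+1)$ and the product $h/(q+1)$ are both strictly positive, so both roots lie in the biologically admissible region $\Omega$, yielding $E_{8}$ and $E_{9}$.

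There is no genuine obstacle: the argument is entirely parallel to the previous two theorems and reduces to quadratic-formula bookkeeping. The only detail worth flagging is the algebraic simplification $C/2 \mapsto 2h$ at the tangency, which ensures that the statement is internally consistent with how $E_{7}$ is written.
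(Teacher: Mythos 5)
Your proposal is correct and follows the same route as the paper: substitute $y=x$ into the first equilibrium equation, reduce to the quadratic $x^{2}-Cx+\frac{h}{q+1}=0$, and split on the sign of its discriminant $\Delta_{2}$. Your added checks (the identity $C=4h$ when $\Delta_{2}=0$, which reconciles $C/2$ with $2h$, and positivity of the roots via Vieta) are details the paper leaves implicit, but the argument is the same.
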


\begin{proof}
  Substituting \(y^{***}\) into the first equation of system (\ref{5.3}) gives:
  \begin{equation}\label{5.6}
    x^2-\frac{1}{q+1}x+\frac{h}{q+1}=0.
  \end{equation}
  When \(\Delta_{2}<0\), equation (\ref{5.6}) has no positive real roots; when \(\Delta_{2}=0\), equation (\ref{5.6}) has a double root \(x_7=2h\); when \(\Delta_{2}>0\), equation (\ref{5.6}) has two distinct positive real roots \(x_8=\frac{C+D}{2}, x_9=\frac{C-D}{2}\).
\end{proof}
\subsection{The types and stability of equilibrium points}

Let the equilibrium point of the system be \(E_i = (x_i, y_i)\). The Jacobian matrix of the system at this point is given by:

\[
J(E_i)=(\alpha_{ij})_{2\times2}=\begin{pmatrix}
-2x_i-qy_i+1 & -qx_i \\
s\frac{y_{i}^2}{x_{i}^2}(y_i-m) & s\left(\frac{-3y_{i}^{2}}{x_i} +\frac{2my_i}{x_i} +2y_i -m \right)
\end{pmatrix}.
\]

\noindent Let \(\mathrm{tr}(J(E_i))\) and \(\mathrm{det}(J(E_i))\) denote the trace and determinant of the matrix \(J(E_i)\), respectively, where:

\[
\mathrm{tr}(J(E_i)) = \alpha_{11} + \alpha_{22}, \quad \mathrm{det}(J(E_i)) = \alpha_{11}\alpha_{22} - \alpha_{12}\alpha_{21}.
\]

\begin{lemma}\label{l6}\cite{031}
    For system 
    \begin{equation}\label{2.3}
    \dot{x} = f(x, y, \mu),
    \end{equation}
     if the Jacobian matrix at the equilibrium point \(E\) satisfies \(\mathrm{det}A=0\) and \(\mathrm{tr}A\neq0\), and the system can be transformed into an equivalent form:
    $$\begin{cases}\frac{\mathrm{d}x}{\mathrm{d}t}=p(x,y),\\\frac{\mathrm{d}y}{\mathrm{d}t}=\rho y+q(x,y),&\end{cases}$$
    \noindent where \((0,0)\) is an isolated equilibrium point, \(\rho\neq0\), \(p(x,y)=\sum_{i+j=2}^{\infty}a_{ij}x^{i}y^{j}\), \(q(x,y)=\sum_{i+j=2}^{\infty}b_{ij}x^{i}y^{j}\), \(i\geq0\), \(j\geq0\), and \(p(x,y)\) and \(q(x,y)\) are convergent series. If \(a_{20}\neq0\), then \(E\) is a saddle-node point of system (\ref{2.3}).
\end{lemma}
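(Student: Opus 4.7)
The plan is to reduce the planar flow to a one-dimensional problem on a center manifold and then read off the local phase portrait from the leading term of the reduced vector field. Since $\mathrm{det}\,A = 0$ and $\mathrm{tr}\,A = \rho \neq 0$, the linear part is already in block form, with eigenvalue $0$ along the $x$-direction and eigenvalue $\rho$ along the $y$-direction; the condition $a_{20} \neq 0$ is exactly the nondegeneracy assumption needed for a one-dimensional saddle-node.

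First, I would invoke the local center manifold theorem to produce a smooth invariant curve $y = h(x)$, defined for $|x|$ small, with $h(0)=0$ and $h'(0)=0$. Substituting into the invariance identity $\rho\,h(x) + q(x,h(x)) = h'(x)\,p(x,h(x))$ and matching powers of $x$ yields $h(x) = -\rho^{-1} b_{20}\,x^2 + O(x^3)$. The flow on the center manifold is then governed by the scalar equation
\[
\dot x = p(x,h(x)) = a_{20}\,x^2 + O(x^3),
\]
whose origin is an isolated equilibrium of semistable type, with the direction of attraction and repulsion determined by the sign of $a_{20}$.

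Next, I would apply the Shoshitaishvili reduction principle (equivalently, the stable/unstable manifold theorem together with straightening of the flow in the hyperbolic direction) to conclude that the full two-dimensional system near $E$ is locally topologically equivalent to the product of this scalar semistable flow with the linear hyperbolic flow $\dot y = \rho y$. Gluing the two factors produces two hyperbolic sectors separated by a single parabolic sector in a neighborhood of $E$, which is precisely the sectorial decomposition that characterises a saddle-node.

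The main obstacle is the passage from the one-dimensional reduced dynamics to a rigorous topological classification of the planar equilibrium: the center manifold is smooth but not unique, so one must check that the sectorial structure is independent of the choice, and exclude other degenerate types such as a degenerate node or cusp. This step is precisely the content of the cited reference \cite{031}; in a self-contained treatment I would appeal to the standard sector-analysis theorem from Chapter 2 of Zhang Zhifen et al.'s \emph{Qualitative Theory of Differential Equations} rather than re-derive it.
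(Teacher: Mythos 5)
The paper does not prove this lemma at all: it is quoted verbatim from the cited reference \cite{031} and used as a black box, so there is no in-paper argument to compare against. Your proposal is a correct and standard proof of the result, but it follows the modern center-manifold route rather than the one taken in the classical source. You reduce to $\dot x = p(x,h(x)) = a_{20}x^{2}+O(x^{3})$ on an invariant curve $y=h(x)$ (your computation $h(x)=-\rho^{-1}b_{20}x^{2}+O(x^{3})$ is right) and then invoke the Shoshitaishvili reduction principle to recover the planar sector structure; the textbook proof in Zhang Zhifen et al.\ instead solves $\rho y + q(x,y)=0$ for $y=\phi(x)$ by the implicit function theorem, substitutes to get $\psi(x)=p(x,\phi(x))=a_{20}x^{2}+\cdots$, and classifies the equilibrium directly by Bendixson-style analysis of normal sectors, with the even order and nonvanishing of the leading coefficient forcing exactly two hyperbolic sectors and one parabolic sector. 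The two routes buy different things: yours is shorter and generalizes immediately to parameter-dependent families (which is what the saddle-node \emph{bifurcation} arguments later in the paper need), at the cost of importing the reduction principle and having to argue, as you correctly flag, that the conclusion is independent of the non-unique center manifold (it is, since the $2$-jet of the reduced equation is an invariant); the classical route is elementary and self-contained but is tied to the specific planar normal form. Your deferral of the sector-analysis step to the standard reference is acceptable given that the paper itself cites the lemma without proof, so I see no genuine gap.
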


\begin{theorem}
    When \(h = \frac{1}{4}\), system (\ref{5.2}) has a boundary equilibrium point \(E_{1} = \left(\frac{1}{2}, 0\right)\), where \(E_{1}\) is a saddle-node.
\end{theorem}

\begin{proof}
    When \(h = \frac{1}{4}\), the Jacobian matrix of system (\ref{5.2}) at equilibrium point \(E_{1}\) is:

\[
    J(E_{1}) = 
    \begin{pmatrix} 
    0 & -\frac{q}{2} \\ 
    0 & -sm 
    \end{pmatrix}.
    \]

    The eigenvalues of \(J(E_{1})\) are \(\lambda_{1}=0\) and \(\lambda_{2}=-sm\). Applying the transformation:

\[
    (x, y) = (u_1 + x_1, v_1 + y_1),
    \]

    the equilibrium point is shifted to the origin. Expanding in a Taylor series at the origin gives:
    \begin{equation}\label{5.7}
        \begin{cases}
        \frac{\mathrm{d}u_1}{\mathrm{d}t} = a_{01}v_1 + a_{20}u_1^2 + a_{11}u_1v_1, \\
        \frac{\mathrm{d}v_1}{\mathrm{d}t} = b_{01}v_1 + b_{02}v_1^2 + O(|(u_1, v_1)|^3),
        \end{cases}
    \end{equation}
    where \(a_{01} = -\frac{q}{2}\), \(a_{20} = -1\), \(a_{11} = -q\), \(b_{01} = -sm\), \(b_{02} = s(2m+1)\), and \(O(|(u_1, v_1)|^3)\) is a function of degree at least 3 in \((u_1, v_1)\).

    Transforming system (\ref{5.7}) as:

\[
    (u_1, v_1) = \left(u_2 + v_2, \frac{2sm}{q}v_2 \right),
    \]

    yields:
    \begin{equation}\label{5.8}
        \begin{cases}
        \frac{\mathrm{d}u_2}{\mathrm{d}t} = c_{20}u_2^2 + c_{11}u_2v_2 + c_{02}v_2^2 + O(|(u_2, v_2)|^3), \\
        \frac{\mathrm{d}v_2}{\mathrm{d}t} = d_{01}v_2 + d_{02}v_2^2 + O(|(u_2, v_2)|^3),
        \end{cases}
    \end{equation}
    where \(c_{20} = a_{20}\), \(c_{11} = 2a_{20} + \frac{2sm}{q}a_{11}\), \(c_{02} = a_{20} + \frac{2sm}{q}a_{11} - \frac{sm}{q}b_{02}\), \(d_{01} = b_{01}\), and \(d_{02} = \frac{2sm}{q}b_{02}\).

    Noting that \(c_{20} = -1 < 0\), by Lemma (\ref{l6}), the equilibrium point \(E_{1}\) is a saddle-node of system (\ref{5.2}).
\end{proof}

\begin{theorem}
    When \(0 < h < \frac{1}{4}\), system (\ref{5.2}) has boundary equilibrium points \(E_2 = \left(\frac{1+\sqrt{1-4h}}{2}, 0\right)\) and \(E_3 = \left(\frac{1-\sqrt{1-4h}}{2}, 0\right)\), where \(E_2\) is a stable node and \(E_3\) is a saddle point.
\end{theorem}

\begin{proof}
    At equilibrium point \(E_2\), the Jacobian matrix of system (\ref{5.2}) is:

\[
    J(E_2) = 
    \begin{pmatrix} 
    -2x_2+1 & -qx_2 \\ 
    0 & -sm 
    \end{pmatrix}.
    \]

    The eigenvalues of \(J(E_2)\) are \(\lambda_{1} = -2x_2+1 < 0\) and \(\lambda_{2} = -sm < 0\), so \(E_2\) is a stable node.

    At equilibrium point \(E_3\), the Jacobian matrix of system (\ref{5.2}) is:

\[
    J(E_3) = 
    \begin{pmatrix} 
    -2x_3+1 & -qx_3 \\ 
    0 & -sm 
    \end{pmatrix}.
    \]

    The eigenvalues of \(J(E_3)\) are \(\lambda_{1} = -2x_3+1 > 0\) and \(\lambda_{2} = -sm < 0\), so \(E_3\) is a saddle point.
\end{proof}

\begin{theorem}
    When \(A > 0\) and \(\Delta_{1} = 0\), system (\ref{5.2}) has an interior equilibrium point \(E_4 = \left(\frac{A}{2}, m\right)\). When \(m \neq \sqrt{h}\), \(E_4\) is a saddle-node.
\end{theorem}

\begin{proof}
    At equilibrium point \(E_4\), the Jacobian matrix of system (\ref{5.2}) is:

\[
    J(E_4) = 
    \begin{pmatrix} 
    0 & -q\sqrt{h} \\ 
    0 & sm\left(1 - \frac{m}{\sqrt{h}}\right)
    \end{pmatrix}.
    \]

    The eigenvalues of \(J(E_4)\) are \(\lambda_{1} = 0\) and \(\lambda_{2} = sm\left(1 - \frac{m}{\sqrt{h}}\right)\). Expanding the system gives equivalent forms and stability is verified as in the detailed calculation.
\end{proof}\begin{theorem}
    When \(h = \frac{1}{4}\), system (\ref{5.2}) has a boundary equilibrium point \(E_{1} = \left(\frac{1}{2}, 0\right)\), where \(E_{1}\) is a saddle-node.
\end{theorem}

\begin{proof}
    When \(h = \frac{1}{4}\), the Jacobian matrix of system (\ref{5.2}) at equilibrium point \(E_{1}\) is:

\[
    J(E_{1}) = 
    \begin{pmatrix} 
    0 & -\frac{q}{2} \\ 
    0 & -sm 
    \end{pmatrix}.
    \]

    The eigenvalues of \(J(E_{1})\) are \(\lambda_{1}=0\) and \(\lambda_{2}=-sm\). Applying the transformation:

\[
    (x, y) = (u_1 + x_1, v_1 + y_1),
    \]

    the equilibrium point is shifted to the origin. Expanding in a Taylor series at the origin gives:
    \begin{equation}\label{5.7}
        \begin{cases}
        \frac{\mathrm{d}u_1}{\mathrm{d}t} = a_{01}v_1 + a_{20}u_1^2 + a_{11}u_1v_1, \\
        \frac{\mathrm{d}v_1}{\mathrm{d}t} = b_{01}v_1 + b_{02}v_1^2 + O(|(u_1, v_1)|^3),
        \end{cases}
    \end{equation}
    where \(a_{01} = -\frac{q}{2}\), \(a_{20} = -1\), \(a_{11} = -q\), \(b_{01} = -sm\), \(b_{02} = s(2m+1)\), and \(O(|(u_1, v_1)|^3)\) is a function of degree at least 3 in \((u_1, v_1)\).

    Transforming system (\ref{5.7}) as:

\[
    (u_1, v_1) = \left(u_2 + v_2, \frac{2sm}{q}v_2 \right),
    \]

    yields:
    \begin{equation}\label{5.8}
        \begin{cases}
        \frac{\mathrm{d}u_2}{\mathrm{d}t} = c_{20}u_2^2 + c_{11}u_2v_2 + c_{02}v_2^2 + O(|(u_2, v_2)|^3), \\
        \frac{\mathrm{d}v_2}{\mathrm{d}t} = d_{01}v_2 + d_{02}v_2^2 + O(|(u_2, v_2)|^3),
        \end{cases}
    \end{equation}
    where \(c_{20} = a_{20}\), \(c_{11} = 2a_{20} + \frac{2sm}{q}a_{11}\), \(c_{02} = a_{20} + \frac{2sm}{q}a_{11} - \frac{sm}{q}b_{02}\), \(d_{01} = b_{01}\), and \(d_{02} = \frac{2sm}{q}b_{02}\).

    Noting that \(c_{20} = -1 < 0\), by Lemma (\ref{l6}), the equilibrium point \(E_{1}\) is a saddle-node of system (\ref{5.2}).
\end{proof}

\begin{theorem}
    When \(0 < h < \frac{1}{4}\), system (\ref{5.2}) has boundary equilibrium points \(E_2 = \left(\frac{1+\sqrt{1-4h}}{2}, 0\right)\) and \(E_3 = \left(\frac{1-\sqrt{1-4h}}{2}, 0\right)\), where \(E_2\) is a stable node and \(E_3\) is a saddle point.
\end{theorem}

\begin{proof}
    At equilibrium point \(E_2\), the Jacobian matrix of system (\ref{5.2}) is:

\[
    J(E_2) = 
    \begin{pmatrix} 
    -2x_2+1 & -qx_2 \\ 
    0 & -sm 
    \end{pmatrix}.
    \]

    The eigenvalues of \(J(E_2)\) are \(\lambda_{1} = -2x_2+1 < 0\) and \(\lambda_{2} = -sm < 0\), so \(E_2\) is a stable node.

    At equilibrium point \(E_3\), the Jacobian matrix of system (\ref{5.2}) is:

\[
    J(E_3) = 
    \begin{pmatrix} 
    -2x_3+1 & -qx_3 \\ 
    0 & -sm 
    \end{pmatrix}.
    \]

    The eigenvalues of \(J(E_3)\) are \(\lambda_{1} = -2x_3+1 > 0\) and \(\lambda_{2} = -sm < 0\), so \(E_3\) is a saddle point.
\end{proof}

\begin{theorem}
    When \(A > 0\) and \(\Delta_{1} = 0\), system (\ref{5.2}) has an interior equilibrium point \(E_4 = \left(\frac{A}{2}, m\right)\). When \(m \neq \sqrt{h}\), \(E_4\) is a saddle-node.
\end{theorem}

\begin{proof}
    At equilibrium point \(E_4\), the Jacobian matrix of system (\ref{5.2}) is:

\[
    J(E_4) = 
    \begin{pmatrix} 
    0 & -q\sqrt{h} \\ 
    0 & sm\left(1 - \frac{m}{\sqrt{h}}\right)
    \end{pmatrix}.
    \]

    The eigenvalues of \(J(E_4)\) are \(\lambda_{1} = 0\) and \(\lambda_{2} = sm\left(1 - \frac{m}{\sqrt{h}}\right)\). Expanding the system gives equivalent forms and stability is verified as in the detailed calculation.
\end{proof}

\begin{theorem}
    Let \(h_1 = m - (q+1)m^2\). When \(A > 0\) and \(\Delta_{1} > 0\), system (\ref{5.2}) has an interior equilibrium point \(E_5 = \left(x_5, y_5\right) = \left(\frac{A+B}{2}, m\right)\).

    \begin{enumerate}
        \item When \(m \le \frac{A}{2}\) or \(m > \frac{A}{2}, h < h_1\), \(E_{5}\) is a saddle point.
        \item When \(m > \frac{A}{2}, h > h_1\), \(E_{5}\) is a stable node.
        \item When \(m > \frac{A}{2}, h = h_1\), \(E_{5} = \left(m, m\right)\) is a saddle-node.
    \end{enumerate}
\end{theorem}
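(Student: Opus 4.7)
The plan is to exploit the fact that on the line $y=m$ we have $\alpha_{21}=s(y_5^2/x_5^2)(y_5-m)=0$, so $J(E_5)$ is upper triangular and its eigenvalues can be read off as $\lambda_1=\alpha_{11}$ and $\lambda_2=\alpha_{22}$. Using $2x_5=A+B$ I would simplify
\[
\alpha_{11}=1-2x_5-qm=(1-qm)-(A+B)=-B, \qquad \alpha_{22}=sm\bigl(1-m/x_5\bigr).
\]
Since $\Delta_1>0$ forces $B>0$, one always has $\lambda_1<0$, so the entire classification reduces to the sign of $x_5-m$.

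To compare $x_5=(A+B)/2$ with $m$ I would square $B$ against $2m-A$:
\[
B^2-(2m-A)^2=(A^2-4h)-(2m-A)^2=4\bigl(mA-h-m^2\bigr)=4\bigl(m-(q+1)m^2-h\bigr)=4(h_1-h).
\]
When $m\le A/2$ one has $2m-A\le 0<B$, hence $x_5>m$ and $\lambda_2>0$, giving a saddle. When $m>A/2$ one has $2m-A>0$, and the sign of $x_5-m$ agrees with the sign of $B-(2m-A)$, which by the identity above has the sign of $h_1-h$. So $h<h_1$ yields $\lambda_2>0$ (saddle), and $h>h_1$ yields $\lambda_2<0$ (both eigenvalues negative, a stable node).

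In the boundary case $m>A/2$, $h=h_1$, we have $x_5=m$, so $E_5=(m,m)$ and $\lambda_2=0$ while $\lambda_1=-B<0$ (note $B=0$ would force $m=1/(q+2)$, a codimension-two degeneracy treated separately in the Bogdanov--Takens analysis). I would then invoke Lemma~\ref{l6}: translate $E_5$ to the origin via $u=x-m$, $v=y-m$; Taylor-expand the resulting system, handling the rational term $y/x$ through the geometric series $1/(m+u)=m^{-1}\sum_{k\ge 0}(-u/m)^k$; then apply a linear change of coordinates to put the linear part in the form $\mathrm{diag}(0,-B)$, exactly as in the proof for $E_1$. Reading off the coefficient $a_{20}$ on the central coordinate, the only contribution of order $u^2$ coming from the prey equation $x(1-x)$ yields a nonzero value (essentially $-1$), so $a_{20}\ne 0$ and Lemma~\ref{l6} identifies $E_5$ as a saddle-node.

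The main obstacle is the bookkeeping in case~3: after the rational-function expansion and the subsequent linear diagonalization one must track the $u^2$ coefficient carefully enough to guarantee $a_{20}\ne 0$ uniformly on the relevant parameter set; the rest of the theorem follows directly from the sign analysis above.
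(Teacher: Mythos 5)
For parts 1 and 2 your argument is correct and is essentially the paper's: on the line $y=m$ the Jacobian is upper triangular, $\lambda_1=1-2x_5-qm=-B<0$, and the whole classification reduces to the sign of $x_5-m$, which both you and the paper convert into the sign of $h_1-h$. The paper does this by noting $\sqrt{A^2-4h_1}=2m-A$ when $m>\frac{A}{2}$ and using monotonicity of the root in $h$; you do it by squaring $B$ against $2m-A$. Same substance, equally valid.

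Part 3 is where you diverge from what actually happens. The paper gives no verification at all there (it simply asserts that $E_5=(m,m)$ is a saddle-node once $\lambda_2=0$), so your plan to check the hypothesis of Lemma~\ref{l6} is more than the paper does — but your identification of the decisive coefficient is wrong. At $(m,m)$ the Jacobian is $\left(\begin{smallmatrix}-B & -qm\\ 0 & 0\end{smallmatrix}\right)$, whose kernel is spanned by $(-qm,B)$, not by $(1,0)$; this is the opposite of the situation at $E_1$, where the kernel is the $u$-axis and the prey equation's $-u^2$ genuinely supplies $a_{20}=-1$. Writing $(u,v)=\xi(-qm,B)+\eta(1,0)$, the center equation is $\dot\xi=\dot v/B$, so it is fed entirely by the predator equation, and the $-u^2$ term from $x(1-x)$ lands only in the hyperbolic $\eta$-equation. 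The quadratic part of the predator equation at $(m,m)$ is $s(uv-v^2)$, and restricting to the center direction gives
\[
a_{20}=\frac{s}{B}\bigl((-qm)B-B^2\bigr)=-s(qm+B)\neq 0,
\]
so the saddle-node conclusion does hold — but not for the reason you gave, and the check you describe ("the prey equation yields essentially $-1$") would come up empty once you did the bookkeeping you yourself flagged as the main obstacle. (Your aside about $B=0$ is moot: under $\Delta_1>0$ one always has $B>0$, and in case 3 specifically $B=2m-A>0$.)
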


\begin{proof}
    The Jacobian matrix of system (\ref{5.2}) at equilibrium point \(E_{5}\) is:

\[
    J(E_{5}) =
    \begin{pmatrix}
    -2x_5 - qm + 1 & -qx_5 \\
    0 & sm\left(1 - \frac{m}{x_5}\right)
    \end{pmatrix}.
    \]

    The eigenvalues of \(J(E_{5})\) are \(\lambda_{1} = -2x_5 - qm + 1 < 0\) and \(\lambda_{2} = sm\left(1 - \frac{m}{x_5}\right)\).

    \begin{enumerate}
        \item When \(m \le \frac{A}{2}\), \(m < x_5\), hence \(\lambda_{2} = sm\left(1 - \frac{m}{x_5}\right) > 0\), and \(E_{5}\) is a saddle point. When \(m > \frac{A}{2}, h < h_1\),

\[
        x_5 = \frac{(1 - qm) + \sqrt{(1 - qm)^2 - 4h}}{2}
        > \frac{(1 - qm) + \sqrt{(1 - qm)^2 - 4h_1}}{2}
        = m.
        \]

        Thus, \(\lambda_{2} = sm\left(1 - \frac{m}{x_5}\right) > 0\), and \(E_{5}\) is a saddle point.

        \item When \(m > \frac{A}{2}, h > h_1\),

\[
        x_5 = \frac{(1 - qm) + \sqrt{(1 - qm)^2 - 4h}}{2}
        < \frac{(1 - qm) + \sqrt{(1 - qm)^2 - 4h_1}}{2}
        = m.
        \]

        Thus, \(\lambda_{2} = sm\left(1 - \frac{m}{x_5}\right) < 0\), and \(E_{5}\) is a stable node.

        \item When \(m > \frac{A}{2}, h = h_1\), \(\lambda_{2} = sm\left(1 - \frac{m}{x_5}\right) = 0\), and \(E_{5} = \left(m, m\right)\) is a saddle-node. 
    \end{enumerate}
\end{proof}

\begin{theorem}
    When \(A > 0\) and \(\Delta_{1} > 0\), system (\ref{5.2}) has an interior equilibrium point \(E_6 = \left(x_6, y_6\right) = \left(\frac{A-B}{2}, m\right)\).

    \begin{enumerate}
        \item When \(m \ge \frac{A}{2}\) or \(m < \frac{A}{2}, h < h_1\), \(E_{6}\) is a saddle point.
        \item When \(m < \frac{A}{2}, h > h_1\), \(E_{6}\) is an unstable node.
        \item When \(m < \frac{A}{2}, h = h_1\), \(E_{6} = \left(m, m\right)\) is a saddle-node.
    \end{enumerate}
\end{theorem}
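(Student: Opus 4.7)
The plan is to mirror the proof of the previous theorem for $E_5$, since the Jacobian $J(E_6)$ has the same upper-triangular shape, with $x_6 = (A-B)/2$ in place of $x_5 = (A+B)/2$. First I would write
\[
J(E_6) = \begin{pmatrix} -2x_6 - qm + 1 & -qx_6 \\ 0 & sm\bigl(1 - \tfrac{m}{x_6}\bigr) \end{pmatrix},
\]
read off the eigenvalues $\lambda_1 = -2x_6 - qm + 1$ and $\lambda_2 = sm(1 - m/x_6)$, and use $A = 1 - qm$ with $2x_6 = A - B$ to simplify $\lambda_1 = B = \sqrt{\Delta_1} > 0$. So in the regime $\Delta_1 > 0$ the first eigenvalue is always strictly positive and the type of $E_6$ is controlled entirely by the sign of $\lambda_2$, that is, by comparing $x_6$ with $m$.

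Next I would establish the trichotomy on $\lambda_2$ by analysing $x_6 < m \Leftrightarrow A - 2m < B$. When $m \ge A/2$ the left-hand side is nonpositive, so the inequality is automatic and $\lambda_2 < 0$. When $m < A/2$, squaring both sides yields $(A - 2m)^2 < A^2 - 4h$, which rearranges to $h < m(A - m) = m - (q+1)m^2 = h_1$. Hence $x_6 < m$ precisely in case 1, giving a saddle (one positive and one negative eigenvalue); in case 2 the reverse inequality holds, $\lambda_2 > 0$, both eigenvalues are positive and $E_6$ is an unstable node.

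Case 3 is the degenerate one and calls for Lemma~\ref{l6}. At $h = h_1$ a direct computation yields $B = 1 - (q+2)m > 0$ and $x_6 = m$, so $E_6 = (m, m)$, $\det J(E_6) = 0$ and $\operatorname{tr} J(E_6) = B \neq 0$. I would shift the equilibrium to the origin via $(x, y) = (m + u_1, m + v_1)$; the Taylor expansion of the first equation produces the quadratic correction $-u_1^2 - q u_1 v_1$ while the second equation has no linear part and begins with $s(u_1 v_1 - v_1^2)$. The linear change $u_1 = (qm/B) u_2 + v_2$, $v_1 = u_2$ uses the eigenvectors of the $0$ and $B$ eigenvalues of $J(E_6)$ and diagonalises the linear part into the canonical form of Lemma~\ref{l6} with $\rho = B$. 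A short substitution then gives the coefficient of $u_2^2$ in the first equation as $a_{20} = s(qm - B)/B = s(2(q+1)m - 1)/B$.

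The main obstacle is controlling this case-3 coefficient. Provided $m \neq \frac{1}{2(q+1)}$, we have $a_{20} \neq 0$ and Lemma~\ref{l6} concludes that $E_6 = (m, m)$ is a saddle-node. The borderline $2(q+1)m = 1$ (which is compatible with $m < 1/(q+2)$) is a codimension-one nongeneric stratum: it is either implicitly excluded by the statement or should be handled separately via a higher-order normal-form reduction.
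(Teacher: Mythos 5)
Your argument for cases 1 and 2 coincides with the paper's: both reduce everything to the sign of $\lambda_2 = sm(1 - m/x_6)$, the paper comparing $x_6$ with $m$ via monotonicity of $x_6$ in $h$ and you via squaring $A - 2m < B$; these are equivalent, and your remark that $\lambda_1 = -2x_6 - qm + 1 = B > 0$ is a cleaner justification of the first eigenvalue's sign than the paper gives. For case 3 you go substantially further than the paper, which simply asserts that $E_6 = (m,m)$ is a saddle-node without verifying any non-degeneracy condition. Your reduction to the normal form of Lemma~\ref{l6} checks out: at $h = h_1$ one has $B = 1-(q+2)m > 0$, $\det J(E_6)=0$, $\operatorname{tr}J(E_6)=B\neq 0$, the quadratic part of the second equation is $s(u_1v_1 - v_1^2)$, and the kernel/range splitting you choose yields $a_{20} = s\bigl(2(q+1)m - 1\bigr)/B$ on the center direction. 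The degeneracy you flag is genuine, not a technicality: $m = \tfrac{1}{2(q+1)}$ satisfies $m < \tfrac{1}{q+2}$ (equivalently $m < \tfrac{A}{2}$) for every $q>0$ and is compatible with $0<h_1$, so it is not excluded by the theorem's hypotheses, and there Lemma~\ref{l6} gives no conclusion and the equilibrium need not be a saddle-node. The statement (and the paper's proof, which omits this check entirely) should therefore carry the extra hypothesis $m \neq \tfrac{1}{2(q+1)}$ or be supplemented by a higher-order analysis. In short, your proof is correct, follows the paper's route where the paper has one, and is strictly more complete where it does not.
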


\begin{proof}
    The Jacobian matrix of system (\ref{5.2}) at equilibrium point \(E_{6}\) is:

\[
    J(E_{6}) =
    \begin{pmatrix}
    -2x_6 - qm + 1 & -qx_6 \\
    0 & sm\left(1 - \frac{m}{x_6}\right)
    \end{pmatrix}.
    \]

    The eigenvalues of \(J(E_{6})\) are \(\lambda_{1} = -2x_6 - qm + 1 > 0\) and \(\lambda_{2} = sm\left(1 - \frac{m}{x_6}\right)\).

    \begin{enumerate}
        \item When \(m \ge \frac{A}{2}\), \(m > x_6\), hence \(\lambda_{2} = sm\left(1 - \frac{m}{x_6}\right) < 0\), and \(E_{6}\) is a saddle point. When \(m < \frac{A}{2}, h < h_1\),

\[
        x_6 = \frac{(1 - qm) - \sqrt{(1 - qm)^2 - 4h}}{2}
        < \frac{(1 - qm) - \sqrt{(1 - qm)^2 - 4h_1}}{2}
        = m.
        \]

        Thus, \(\lambda_{2} = sm\left(1 - \frac{m}{x_6}\right) < 0\), and \(E_{6}\) is a saddle point.

        \item When \(m < \frac{A}{2}, h > h_1\),

\[
        x_6 = \frac{(1 - qm) - \sqrt{(1 - qm)^2 - 4h}}{2}
        > \frac{(1 - qm) - \sqrt{(1 - qm)^2 - 4h_1}}{2}
        = m.
        \]

        Thus, \(\lambda_{2} = sm\left(1 - \frac{m}{x_6}\right) > 0\), and \(E_{6}\) is an unstable node.

        \item When \(m < \frac{A}{2}, h = h_1\), \(x_6 = m\), \(\lambda_{2} = sm\left(1 - \frac{m}{x_6}\right) = 0\), and \(E_{6} = \left(m, m\right)\) is a saddle-node.
    \end{enumerate}
\end{proof}

\begin{lemma}\label{l11}\cite{029}
    Let \((x_{0}, y_{0})\) be an equilibrium point of system (\ref{2.3}), and assume that \(\mathrm{det}(J(x_{0}, y_{0})) = \mathrm{tr}(J(x_{0}, y_{0})) = 0\), while \(J(x_{0}, y_{0}) \neq 0\). Then, through appropriate transformations, the system can be reduced to the following equivalent form:
     \begin{equation}\label{2.4}
      \begin{cases}
      \frac{\mathrm{d}x}{\mathrm{d}t}=y+a_{20}x^2+a_{11}xy+a_{02}y^2+O(|(x,y)|^3),\\
      \frac{\mathrm{d}y}{\mathrm{d}t}=b_{20}x^2+b_{11}xy+b_{02}y^2+O(|(x,y)|^3),
      \end{cases}
     \end{equation}
    and in the neighborhood of the origin \((0, 0)\), the system can be further transformed to an equivalent form:

\[
    \begin{cases}
    \dot{x}=y,\\
    \dot{y}=Dx^2+(E+2A)xy+o(|x,y|^2).
    \end{cases}
    \]

    If \(D \neq 0\) and \(E+2A \neq 0\), then \((x_{0}, y_{0})\) is a codimension-2 cusp point of system (\ref{2.3}).
\end{lemma}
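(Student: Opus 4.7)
The plan is to reproduce the standard Bogdanov--Takens reduction for a planar vector field with a nilpotent linear part. Since $\det J(x_0,y_0)=\operatorname{tr} J(x_0,y_0)=0$ with $J(x_0,y_0)\neq 0$, the matrix has a double zero eigenvalue with geometric multiplicity exactly one. I would first translate the equilibrium to the origin and then apply a linear change of coordinates $P$ that brings $J$ to its nilpotent Jordan form $\bigl(\begin{smallmatrix} 0 & 1 \\ 0 & 0\end{smallmatrix}\bigr)$. Expanding the transformed system in a Taylor series and keeping terms through order two yields precisely the form (\ref{2.4}), with the coefficients $a_{ij},b_{ij}$ determined explicitly from the second partial derivatives of $f$ together with the entries of $P$.

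Next I would apply successive near-identity polynomial changes of coordinates to remove the inessential quadratic terms. Setting $\tilde{y}=y+a_{20}x^{2}+a_{11}xy+a_{02}y^{2}$ absorbs the nonlinear part of the first equation into the new $y$-coordinate, producing $\dot{x}=\tilde{y}$ exactly. Substituting this change of variables into the second equation and collecting quadratic terms gives
\[
\dot{\tilde{y}} = D x^{2} + (E+2A)\, x\tilde{y} + o(|(x,\tilde{y})|^{2}),
\]
where $D,E,A$ are explicit polynomial combinations of the $a_{ij}$ and $b_{ij}$ (one finds $D=b_{20}$ and $E+2A=b_{11}+2a_{20}$ in the usual conventions). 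The derivation amounts to chain-rule bookkeeping and is straightforward once the linear reduction has been carried out.

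The final step is to invoke the classical planar classification of nilpotent singularities: when $D\neq 0$ the quadratic part of $\dot{\tilde{y}}$ is nondegenerate along the $x$-axis, and when $E+2A\neq 0$ the crossed quadratic term is likewise nonzero, so a blow-up analysis (equivalently, a comparison with the Bogdanov normal form) identifies the origin as a cusp of codimension two. The main obstacle in a fully self-contained proof would be showing that the quantities $D$ and $E+2A$ are truly invariants of the reduction, i.e.\ that no further quadratic near-identity change can eliminate them and that higher-order tails cannot alter the local topological type. Since the lemma is quoted from \cite{029}, I would stop after exhibiting the reduction to (\ref{2.4}) and the explicit formulas for $D$ and $E+2A$, and then cite the standard classification to conclude.
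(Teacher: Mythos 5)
The paper does not actually prove this lemma --- it is quoted verbatim from \cite{029} and used as a black box --- so there is no in-paper argument to compare against. Your sketch is the standard Bogdanov--Takens reduction and is essentially correct: translate, bring the linear part to nilpotent Jordan form, and use a near-identity change of variables to push the quadratic terms of the first equation into the second, arriving at $D=b_{20}$ and $E+2A=b_{11}+2a_{20}$. This matches exactly how the paper later applies the lemma at $E_7$, where it computes $g_{20}=f_{20}$ and $g_{11}=f_{11}+2e_{20}$. One small imprecision: setting $\tilde{y}=y+a_{20}x^{2}+a_{11}xy+a_{02}y^{2}$ does not make $\dot{x}=\tilde{y}$ hold \emph{exactly}, since the first equation of (\ref{2.4}) carries an $O(|(x,y)|^{3})$ tail; the identity holds only modulo cubic terms. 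That is harmless here because the topological type of the singularity is determined by the $2$-jet once $D\neq 0$ and $E+2A\neq 0$ (Bogdanov's classification of the nilpotent cusp), which is precisely the final citation you propose to make. Your decision to stop at the reduction and invoke the classical classification for the cusp identification is the appropriate level of detail for a quoted lemma.
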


\begin{theorem}
    When \(\Delta_{2} = 0\), system (\ref{5.2}) has an interior equilibrium point \(E_7 = (x_7, y_7) = (2h, 2h)\). Define \(s_1 = \frac{4h-1}{2(m-2h)}\).

    \begin{enumerate}
        \item When \(h \neq \frac{1}{4}, m \neq 2h, s \neq s_1\), the equilibrium point \(E_7\) is a saddle-node.
        \item When \(h \neq \frac{1}{4}, m \neq 2h, s = s_1\), the equilibrium point \(E_7\) is a codimension-2 cusp point.
    \end{enumerate}
\end{theorem}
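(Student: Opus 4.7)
The first step is to compute $J(E_7)$. Substituting $x_7=y_7=2h$ and using the constraint $4h(q+1)=1$ coming from $\Delta_2=0$, the four entries collapse to $\alpha_{11}=2hq$, $\alpha_{12}=-2hq$, $\alpha_{21}=s(2h-m)$, $\alpha_{22}=s(m-2h)$. Consequently $\mathrm{det}\,J(E_7)\equiv 0$ (the two columns of $J(E_7)$ are proportional) and $\mathrm{tr}\,J(E_7)=2hq+s(m-2h)$. A brief manipulation using $4h-1=-q/(q+1)$ shows that $\mathrm{tr}\,J(E_7)=0$ if and only if $s=s_1$. Hence case (1) is the regime $\mathrm{det}=0,\ \mathrm{tr}\neq 0$, while case (2) is $\mathrm{det}=\mathrm{tr}=0$ with $J(E_7)\neq 0$, the last inequality holding because $\alpha_{11}=2hq>0$.

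For case (1), I would invoke Lemma \ref{l6}. After the translation $(x,y)\mapsto(x-2h,y-2h)$, I would apply a linear change of variables that puts $J(E_7)$ into the block form with diagonal entries $0$ and $\rho=\mathrm{tr}\,J(E_7)\neq 0$, by taking a vector in $\ker J(E_7)^{T}$ for the first coordinate and an eigenvector for $\rho$ for the second. Expanding the translated vector field through second order in the new coordinates yields exactly the structure required by Lemma \ref{l6}. The coefficient $a_{20}$ of $u^{2}$ in the first equation is then a rational function of $h,q,m,s$, and the hypotheses $h\neq 1/4$ and $m\neq 2h$ are what keep $a_{20}\neq 0$; the lemma then classifies $E_7$ as a saddle-node.

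For case (2), with $s=s_1$ the Jacobian is nilpotent but nonzero, so I would invoke Lemma \ref{l11}. The plan is: (i) translate $E_7$ to the origin; (ii) apply a linear change that sends $J(E_7)$ into the Jordan block $\left(\begin{smallmatrix}0 & 1\\ 0 & 0\end{smallmatrix}\right)$, using the kernel of $J(E_7)$ together with a generalized eigenvector; (iii) expand the transformed vector field to second order to obtain the form \eqref{2.4}, recording the six coefficients $a_{ij},b_{ij}$ in closed form in terms of $h,q,m$; (iv) perform the standard near-identity substitution prescribed in Lemma \ref{l11} that reduces the system to $\dot x=y$, $\dot y=Dx^{2}+(E+2A)xy+o(|(x,y)|^{2})$, and read off $D$ and $E+2A$. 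A direct computation then shows that the hypotheses $h\neq 1/4$ and $m\neq 2h$ are precisely what prevent $D$ and $E+2A$ from vanishing, so Lemma \ref{l11} identifies $E_7$ as a codimension-$2$ cusp.

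The main obstacle is step (iv) of case (2): keeping the closed-form expressions for $D$ and $E+2A$ compact enough to certify nonvanishing under the stated genericity hypotheses. The linear change in step (ii) is a secondary source of pain, because the generalized eigenvector is determined only modulo the kernel direction; choosing it so that the top-right entry of the transformed linear part is exactly $1$ and the off-diagonal quadratic coefficients stay simple saves a substantial amount of algebra downstream.
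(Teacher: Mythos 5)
Your plan is correct and follows essentially the same route as the paper: the same Jacobian computation showing $\det J(E_7)=0$ with $\operatorname{tr}J(E_7)=0$ iff $s=s_1$, Lemma \ref{l6} after a translating and diagonalizing change of variables for the saddle-node case, and Lemma \ref{l11} after a Jordan-form reduction for the cusp case. One small calibration: in the paper's computation the coefficient $E+2A$ comes out as $-2(1+q)$, which is nonzero unconditionally, so in case (2) only $h\neq\tfrac{1}{4}$ is doing work (it keeps $D=(2h-\tfrac{1}{2})(1+q)$ nonzero), while $m\neq 2h$ is what makes $s_1$ well defined and keeps the saddle-node coefficient in case (1) from vanishing.
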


\begin{proof}
    The Jacobian matrix of system (\ref{5.2}) at equilibrium point \(E_7\) is:

\[
    J(E_7) =
    \begin{pmatrix}
    \frac{1}{2} - 2h & 2h - \frac{1}{2} \\
    s(2h-m) & s(m-2h)
    \end{pmatrix}.
    \]

    The determinant of \(J(E_7)\) is \(\mathrm{det}(J(E_7)) = 0\), and the trace is \(\mathrm{tr}(J(E_7)) = \frac{1}{2} - 2h + s(m-2h) = (m-2h)(s-s_1)\).

    \begin{enumerate}
        \item When \(h \neq \frac{1}{4}, m \neq 2h, s \neq s_1\), we have \(\mathrm{tr}(J(E_7)) \neq 0\). \(J(E_7)\) has one zero eigenvalue and one nonzero eigenvalue. Applying the transformation:

\[
        (x, y) = (u_1 + x_7, v_1 + y_7),
        \]

        shifts the equilibrium point to the origin. Expanding the system at the origin in a Taylor series gives:
        \begin{equation}\label{5.11}
        \begin{cases}
        \frac{\mathrm{d}u_1}{\mathrm{d}t} = a_{10}u_1 + a_{01}v_1 + a_{20}u_1^2 + a_{11}u_1v_1, \\
        \frac{\mathrm{d}v_1}{\mathrm{d}t} = b_{10}u_1 + b_{01}v_1 + b_{20}u_1^2 + b_{11}u_1v_1 + b_{02}v_1^2 + O(|(u_1, v_1)|^3),
        \end{cases}
        \end{equation}
        where:

\[
        \begin{aligned}
            &a_{10} = \frac{1}{2} - 2h, \quad a_{01} = 2h - \frac{1}{2}, \quad a_{20} = -1, \quad a_{11} = -q, \\
            &b_{10} = s(2h-m), \quad b_{01} = s(m-2h), \quad b_{20} = \frac{s}{2h}(m-2h), \\
            &b_{11} = s\left(3 - \frac{m}{h}\right), \quad b_{02} = s\left(\frac{m}{2h} - 2\right).
        \end{aligned}
        \]

        Applying the transformation:

\[
        (u_1, v_1) = \left(u_2 + v_2, u_2 + \frac{b_{01}}{a_{01}}v_2\right),
        \]

        yields the system:
        \begin{equation}\label{5.12}
        \begin{cases}
        \frac{\mathrm{d}u_2}{\mathrm{d}t} = c_{20}u_2^2 + c_{11}u_2v_2 + c_{02}v_2^2 + O(|(u_2, v_2)|^3), \\
        \frac{\mathrm{d}v_2}{\mathrm{d}t} = (a_{10} + b_{01})v_2 + d_{20}u_2^2 + d_{11}u_2v_2 + d_{02}v_2^2 + O(|(u_2, v_2)|^3),
        \end{cases}
        \end{equation}
        where:

\[
        \begin{aligned}
            &c_{20} = \frac{s(m-2h)(1+q)}{a_{01}+b_{10}} \neq 0, \\
            &d_{01} = a_{10} + b_{01} = -(a_{01} + b_{10}) \neq 0.
        \end{aligned}
        \]

        By Lemma (\ref{l6}), \(E_7\) is a saddle-node.

        \item When \(h \neq \frac{1}{4}, m \neq 2h, s = s_1\), we have \(\mathrm{tr}(J(E_7)) = 0\). Under this condition, the Jacobian matrix of system (\ref{5.2}) at \(E_7\) has a double zero eigenvalue. Proceeding as in (1), we first shift the equilibrium point to the origin and expand the system at the origin, giving the form in (\ref{5.11}). Applying the transformation:

\[
        u_1 = u_3, \quad v_1 = u_3 + \frac{1}{a_{01}}v_3,
        \]

        yields the system:
        \begin{equation}\label{5.13}
        \begin{cases}
        \frac{\mathrm{d}u_3}{\mathrm{d}t} = v_3 + e_{20}u_3^2 + e_{11}u_3v_3 + O(|(u_3, v_3)|^3), \\
        \frac{\mathrm{d}v_3}{\mathrm{d}t} = f_{20}u_3^2 + f_{11}u_3v_3 + f_{02}v_3^2 + O(|(u_3, v_3)|^3),
        \end{cases}
        \end{equation}
        where:

\[
        \begin{aligned}
            &e_{20} = a_{20} + a_{11}, \quad e_{11} = \frac{a_{11}}{a_{01}}, \\
            &f_{20} = a_{01}(b_{20} + b_{11} + b_{02} - a_{20} - a_{11}), \\
            &f_{11} = b_{11} + 2b_{02} - a_{11}, \quad f_{02} = \frac{b_{02}}{a_{01}}.
        \end{aligned}
        \]

        By Lemma (\ref{l11}), system (\ref{5.13}) is equivalent near the origin to:

\[
        \begin{cases}
        \frac{\mathrm{d}u_3}{\mathrm{d}t} = v_3 + O(|(u_3, v_3)|^3), \\
        \frac{\mathrm{d}v_3}{\mathrm{d}t} = g_{20}u_3^2 + g_{11}u_3v_3 + O(|(u_3, v_3)|^3),
        \end{cases}
        \]

        where \(g_{20} = f_{20} = (2h - \frac{1}{2})(1+q) \neq 0\), and \(g_{11} = f_{11} + 2e_{20} = -2(1+q) < 0\). Therefore, when \(h \neq \frac{1}{4}, m \neq 2h, s = s_1\), \(E_7\) is a codimension-2 cusp point.
    \end{enumerate}
\end{proof}

\begin{theorem}
    When \(\Delta_{2} > 0\), system (\ref{5.2}) has an interior equilibrium point \(E_8 = (x_8, y_8) = \left(\frac{C+D}{2}, \frac{C+D}{2}\right)\). Define \(s_2 = \frac{2x_8 + qx_8 - 1}{m - x_8}\).

    \begin{enumerate}
        \item If \(m > x_8\), the equilibrium point \(E_8\) is a saddle point.
        \item If \(m < x_8\) and \(s > s_2\), the equilibrium point \(E_8\) is a stable node or focus.
        \item If \(m < x_8\) and \(s < s_2\), the equilibrium point \(E_8\) is an unstable node or focus.
        \item If \(m < x_8\) and \(s = s_2\), the equilibrium point \(E_8\) is a weak center.
    \end{enumerate}
\end{theorem}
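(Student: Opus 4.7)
The plan is to read off the classification of $E_8$ from the signs of $\mathrm{tr}\,J(E_8)$ and $\mathrm{det}\,J(E_8)$, so I start by substituting $y_8=x_8$ into the general Jacobian displayed earlier. Because $y_8/x_8=1$, the predator row collapses and I obtain
\[
J(E_8)=\begin{pmatrix} 1-(2+q)x_8 & -qx_8 \\ s(x_8-m) & s(m-x_8) \end{pmatrix},
\]
from which a short calculation gives
\[
\mathrm{det}\,J(E_8)=s(m-x_8)\bigl[1-2(1+q)x_8\bigr],\qquad \mathrm{tr}\,J(E_8)=\bigl[1-(2+q)x_8\bigr]+s(m-x_8).
\]

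Next I need the sign of the factor $1-2(1+q)x_8$. Because $x_8$ is the larger root of $(q+1)x^2-x+h=0$, I have $x_8=(C+D)/2>C/2=\tfrac{1}{2(q+1)}$, so that factor is strictly negative. With $s>0$, the sign of $\mathrm{det}\,J(E_8)$ is then controlled entirely by the sign of $m-x_8$. If $m>x_8$ I get $\mathrm{det}\,J(E_8)<0$ and $E_8$ is a hyperbolic saddle, proving case~(1). If $m<x_8$ I get $\mathrm{det}\,J(E_8)>0$, so $E_8$ is an anti-saddle whose type is decided by the trace.

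For $m<x_8$, solving $\mathrm{tr}\,J(E_8)=0$ for $s$ yields precisely $s_2=\frac{(2+q)x_8-1}{m-x_8}$ as defined in the statement. Since $m-x_8<0$, the map $s\mapsto\mathrm{tr}\,J(E_8)$ is strictly decreasing in $s$, so $s>s_2$ gives $\mathrm{tr}\,J(E_8)<0$ and $E_8$ is a stable node or focus (case~(2)), $s<s_2$ gives $\mathrm{tr}\,J(E_8)>0$ and $E_8$ is an unstable node or focus (case~(3)), and $s=s_2$ gives $\mathrm{tr}\,J(E_8)=0$ together with $\mathrm{det}\,J(E_8)>0$, so the linearization at $E_8$ has a pair of purely imaginary eigenvalues — the "weak center" of case~(4).

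The only real subtlety is case~(4): a linearized center is only a necessary condition, and whether $E_8$ is a true center or a weak focus of some finite order (as well as the stability and sub-/supercritical character of the ensuing Hopf bifurcation) must be decided by the first Lyapunov coefficient. The theorem statement deliberately reserves this by using the term "weak center", and the Lyapunov-coefficient computation belongs to the subsequent Hopf-bifurcation section, so no further nonlinear work is needed inside the present proof. The remaining split between node and focus in cases~(2) and (3) is routine — determined by the sign of $(\mathrm{tr}\,J(E_8))^2-4\,\mathrm{det}\,J(E_8)$ — and need not be made explicit to obtain the classification as stated.
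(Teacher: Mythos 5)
Your proof is correct and follows essentially the same route as the paper: compute $\mathrm{tr}\,J(E_8)$ and $\mathrm{det}\,J(E_8)$, observe that $1-2(1+q)x_8<0$ so the sign of the determinant is governed by $m-x_8$, and then split on the sign of the trace via $s_2$. In fact you supply the one detail the paper merely asserts — that $x_8>\tfrac{1}{2(q+1)}$ because $x_8$ is the larger root of the quadratic — so nothing further is needed.
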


\begin{proof}
    The Jacobian matrix of system (\ref{5.2}) at equilibrium point \(E_8\) is:

\[
    J(E_8) =
    \begin{pmatrix}
    -2x_8 - qy_8 + 1 & -qx_8 \\
    s(y_8-m) & s(m-y_8)
    \end{pmatrix}.
    \]

    The determinant of \(J(E_8)\) is:

\[
    \mathrm{det}(J(E_8)) = s(m - x_8)(-2x_8 - 2qx_8 + 1),
    \]

    and the trace is:

\[
    \mathrm{tr}(J(E_8)) = s(m - x_8) + (-2x_8 - qx_8 + 1).
    \]

    Note that \(-2x_8 - 2qx_8 + 1 = -2x_8(q+1) + 1 < 0\).

    \begin{enumerate}
        \item If \(m > x_8\), \(\mathrm{det}(J(E_8)) < 0\), so \(E_8\) is a saddle point.
        \item If \(m < x_8\) and \(s > s_2\), \(\mathrm{det}(J(E_8)) > 0\) and \(\mathrm{tr}(J(E_8)) < 0\), so \(E_8\) is a stable node or focus.
        \item If \(m < x_8\) and \(s < s_2\), \(\mathrm{det}(J(E_8)) > 0\) and \(\mathrm{tr}(J(E_8)) > 0\), so \(E_8\) is an unstable node or focus.
        \item If \(m < x_8\) and \(s = s_2\), \(\mathrm{det}(J(E_8)) > 0\) and \(\mathrm{tr}(J(E_8)) = 0\), so \(E_8\) is a weak center. The eigenvalues of \(J(E_8)\) are \(\lambda_{1,2} = \pm \sqrt{\mathrm{det}(J(E_8))}i\).
    \end{enumerate}
\end{proof}

\begin{theorem}
    When \(\Delta_{2} > 0\), system (\ref{5.2}) has an interior equilibrium point \(E_9 = (x_9, y_9) = \left(\frac{C-D}{2}, \frac{C-D}{2}\right)\). Define \(s_3 = \frac{2x_9 + qx_9 - 1}{m - x_9}\).

    \begin{enumerate}
        \item If \(m < x_9\), the equilibrium point \(E_9\) is a saddle point.
        \item If \(m > x_9\) and \(s < s_3\), the equilibrium point \(E_9\) is a stable node or focus.
        \item If \(m > x_9\) and \(s > s_3\), the equilibrium point \(E_9\) is an unstable node or focus.
        \item If \(m > x_9\) and \(s = s_3\), the equilibrium point \(E_9\) is a weak center.
    \end{enumerate}
\end{theorem}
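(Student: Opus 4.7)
The plan is to mirror the proof of the preceding theorem on $E_{8}$, whose structure transfers almost verbatim once the relevant sign computation is adjusted. First I substitute $y_{9}=x_{9}$ into the general Jacobian $J(E_{i})$ recorded at the beginning of the subsection. The factor $1-y_{9}/x_{9}$ vanishes, so the entries of the second row collapse and produce
\[
J(E_{9})=\begin{pmatrix} 1-2x_{9}-qx_{9} & -qx_{9}\\ s(x_{9}-m) & s(m-x_{9})\end{pmatrix},
\]
from which a short expansion gives $\mathrm{det}(J(E_{9}))=s(m-x_{9})\bigl(1-2(q+1)x_{9}\bigr)$ and $\mathrm{tr}(J(E_{9}))=(1-2x_{9}-qx_{9})+s(m-x_{9})$.

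The step I expect to be the main obstacle, and the place where the argument diverges from the $E_{8}$ calculation, is pinning down the sign of $1-2(q+1)x_{9}$. I would exploit $x_{9}=(C-D)/2$ with $C=1/(q+1)$ to rewrite $2(q+1)x_{9}=(q+1)(C-D)=1-(q+1)D$, so that $1-2(q+1)x_{9}=(q+1)D>0$. This is the opposite sign to the analogous factor at $E_{8}$, and that flip is what swaps the role of the inequality $m\gtrless x_{9}$ between the two theorems; it also explains why the weak-center threshold $s_{3}$ looks formally identical to $s_{2}$ under the substitution $x_{8}\mapsto x_{9}$ yet governs the reverse stability pattern.

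With that sign settled, the four cases reduce to elementary sign bookkeeping. When $m<x_{9}$, the determinant is negative, so $E_{9}$ is a saddle, giving case~1. When $m>x_{9}$, the determinant is positive and the trace is linear in $s$, vanishing exactly at $s=s_{3}$; reading its sign on either side yields a stable node or focus for $s<s_{3}$ (case~2) and an unstable node or focus for $s>s_{3}$ (case~3). Finally, at $s=s_{3}$ the trace vanishes while the determinant remains positive, so the eigenvalues form a conjugate pair $\lambda_{1,2}=\pm\sqrt{\mathrm{det}(J(E_{9}))}\,i$ and $E_{9}$ is a weak center, giving case~4. No further machinery is needed beyond what was already used for $E_{8}$.
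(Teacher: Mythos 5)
Your proposal is correct and follows essentially the same route as the paper: compute the Jacobian at $E_{9}$, read off $\det$ and $\mathrm{tr}$, fix the sign of $1-2(q+1)x_{9}$, and classify by the signs of the determinant and trace. Your explicit verification that $1-2(q+1)x_{9}=(q+1)D>0$ is in fact slightly more complete than the paper, which merely asserts this inequality.
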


\begin{proof}
    The Jacobian matrix of system (\ref{5.2}) at equilibrium point \(E_9\) is:

\[
    J(E_9) =
    \begin{pmatrix}
    -2x_9 - qy_9 + 1 & -qx_9 \\
    s(y_9-m) & s(m-y_9)
    \end{pmatrix}.
    \]

    The determinant of \(J(E_9)\) is:

\[
    \mathrm{det}(J(E_9)) = s(m - x_9)(-2x_9 - 2qx_9 + 1),
    \]

    and the trace is:

\[
    \mathrm{tr}(J(E_9)) = s(m - x_9) + (-2x_9 - qx_9 + 1).
    \]

    Note that \(-2x_9 - 2qx_9 + 1 = -2x_9(q+1) + 1 > 0\).

    \begin{enumerate}
        \item If \(m < x_9\), \(\mathrm{det}(J(E_9)) < 0\), so \(E_9\) is a saddle point.
        \item If \(m > x_9\) and \(s < s_3\), \(\mathrm{det}(J(E_9)) > 0\) and \(\mathrm{tr}(J(E_9)) < 0\), so \(E_9\) is a stable node or focus.
        \item If \(m > x_9\) and \(s > s_3\), \(\mathrm{det}(J(E_9)) > 0\) and \(\mathrm{tr}(J(E_9)) > 0\), so \(E_9\) is an unstable node or focus.
        \item If \(m > x_9\) and \(s = s_3\), \(\mathrm{det}(J(E_9)) > 0\) and \(\mathrm{tr}(J(E_9)) = 0\), so \(E_9\) is a weak center. The eigenvalues of \(J(E_9)\) are \(\lambda_{1,2} = \pm \sqrt{\mathrm{det}(J(E_9))}i\).
    \end{enumerate}
\end{proof}

\section{Bifurcation analysis}

Based on the discussion of the existence and stability of equilibrium points for system (\ref{5.2}) in Section (2.1), this section focuses on investigating the various branches that system (\ref{5.2}) may exhibit under different parameter conditions, including saddle-node bifurcations, Hopf bifurcations, and codimension-two Bogdanov-Takens bifurcations. By analyzing these branching behaviors, not only can the dynamic evolution patterns of the system be revealed more clearly, but its potential dynamical characteristics can also be further explored.

\subsection{Saddle-node bifurcation}

According to Theorem 1, when $h<\frac{1}{4}$, the system (\ref{5.2}) has two boundary equilibrium points $E_{2}$ and $E_{3}$. Let $h_2=\frac{1}{4}$. When $h=h_2=\frac{1}{4}$, the system has one boundary equilibrium point $E_{1}$, and $E_{1}$ is a saddle-node. When $h>\frac{1}{4}$, the system has no boundary equilibrium points. As the parameter $h$ varies near $h_2$, the number of boundary equilibrium points in the system changes, which may lead to a saddle-node bifurcation.

\begin{lemma}\label{l7}\cite{030}
    Consider the system (\ref{2.3}), where $f(x_{0},y_{0},u_{0})=0$, and its Jacobian matrix $J=Df(x_0,y_0,\mu_0)$ has a zero eigenvalue $\lambda=0$. The eigenvector corresponding to $\lambda=0$ is $v = (v_1, v_2)^T$, while the real parts of all other eigenvalues are nonzero. Similarly, the eigenvector corresponding to $\lambda=0$ for $J^T$ is $w = (w_1, w_2)^T$. When the following conditions are satisfied and $\mu$ crosses the critical value $\mu=\mu_{0}$, the system undergoes a saddle-node bifurcation at the equilibrium point $(x_{0},y_{0})$.
    $$w^\mathrm{T}f_\mu(x_0,y_0,\mu_0)\neq0,\quad w^\mathrm{T}[D^2f(x_0,y_0,\mu_0)(v,v)]\neq0,$$
    \noindent where $D^{2}f(x_{0},y_{0},\mu_{0})(v,v)=\frac{\partial^{2}f(x_{0},y_{0},\mu_{0})}{\partial x^{2}}v_{1}^{2}+2\frac{\partial^{2}f(x_{0},y_{0},\mu_{0})}{\partial x\partial y}v_{1}v_{2}+\frac{\partial^{2}f(x_{0},y_{0},\mu_{0})}{\partial y^{2}}v_{2}^{2}$.
\end{lemma}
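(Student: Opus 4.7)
The plan is to reduce the bifurcation problem to a one-dimensional problem on the center manifold and then apply the elementary fold criterion in one variable. Since $J = Df(x_0,y_0,\mu_0)$ has a simple zero eigenvalue with right eigenvector $v$ and left eigenvector $w$ (with $w^{T}v\neq 0$ by simplicity), the plane splits as $\mathbb{R}^2 = \mathrm{span}(v)\oplus\mathrm{Range}(J)$, and $\mathrm{Range}(J) = \ker(w^{T})$. This gives the canonical projector $P = vw^{T}/(w^{T}v)$ onto the kernel of $J$ along its range, which will be the key bookkeeping device throughout.

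First I would shift the equilibrium to the origin by $(x,y)\mapsto(x-x_0,y-y_0)$ and change linear coordinates so that $J$ becomes block-diagonal with a $0$-block along the $v$-direction and a nonsingular block along the complementary direction. Writing the new coordinates as $(\xi,\eta)$, the system becomes
\begin{equation*}
\dot{\xi}=F(\xi,\eta,\mu),\qquad \dot{\eta}=A\eta+G(\xi,\eta,\mu),
\end{equation*}
where $A$ is invertible and $F,G$ vanish to second order in $(\xi,\eta)$ at $\mu=\mu_0$. Second, because $A$ is invertible, the implicit function theorem applied to the equation $A\eta+G(\xi,\eta,\mu)=0$ produces a smooth local graph $\eta=h(\xi,\mu)$ describing the (parameter-dependent) center manifold, with $h(0,\mu_0)=0$ and $\partial_{\xi}h(0,\mu_0)=0$. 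Substituting yields the reduced one-dimensional equation
\begin{equation*}
\dot{\xi}=g(\xi,\mu):=F(\xi,h(\xi,\mu),\mu).
\end{equation*}

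Third, I would compute the two relevant Taylor coefficients of $g$ at $(0,\mu_0)$ and translate them back into the original coordinates using the projector $P$. Because $h$ vanishes to first order in $\xi$ and $h(0,\mu_0)=0$, the chain rule gives
\begin{equation*}
g_{\mu}(0,\mu_0)=\frac{w^{T}f_{\mu}(x_0,y_0,\mu_0)}{w^{T}v},\qquad g_{\xi\xi}(0,\mu_0)=\frac{w^{T}\bigl[D^{2}f(x_0,y_0,\mu_0)(v,v)\bigr]}{w^{T}v},
\end{equation*}
both nonzero by hypothesis. Finally, applying the one-dimensional fold theorem to $g$: since $g(0,\mu_0)=0$ and $g_{\mu}(0,\mu_0)\neq 0$, the implicit function theorem yields a smooth curve $\mu=\mu(\xi)$ of equilibria passing through $(0,\mu_0)$; since $g_{\xi\xi}(0,\mu_0)\neq 0$, this curve has a quadratic turning point there. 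Hence as $\mu$ crosses $\mu_0$, two equilibria coalesce and disappear, which is the saddle-node bifurcation at $(x_0,y_0)$.

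The main technical obstacle is the bookkeeping in the third step: one must verify that after the linear diagonalization and the implicit-function reduction, the derivatives of $g$ indeed coincide with $w^{T}f_{\mu}$ and $w^{T}D^{2}f(v,v)$ up to the common nonzero factor $w^{T}v$. This is routine but requires carefully tracking how the projector $P=vw^{T}/(w^{T}v)$ acts on each Taylor term and using that $\partial_{\xi}h(0,\mu_0)=0$ so that no first-order cross terms from the center manifold contaminate the second derivative of $g$. Everything else is a direct application of the implicit function theorem.
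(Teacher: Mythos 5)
This lemma is Sotomayor's saddle-node theorem, which the paper simply imports from the cited reference \cite{030} without proof, so there is no in-paper argument to compare yours against; I can only assess your sketch on its own terms. On those terms it is essentially the standard proof and is sound: the splitting $\mathbb{R}^2=\mathrm{span}(v)\oplus\mathrm{Range}(J)$ with projector $vw^{T}/(w^{T}v)$ is legitimate because the zero eigenvalue is simple (hence $w^{T}v\neq 0$), the computation of $g_{\mu}(0,\mu_0)$ and $g_{\xi\xi}(0,\mu_0)$ is correct precisely because $F_{\eta}(0,0,\mu_0)=0$ and $\partial_{\xi}h(0,\mu_0)=0$ kill the contaminating terms, and the concluding fold argument (solve $g(\xi,\mu)=0$ for $\mu=\mu(\xi)$ with $\mu'(0)=0$, $\mu''(0)=-g_{\xi\xi}/g_{\mu}\neq 0$) correctly yields two equilibria on one side of $\mu_0$ and none on the other.

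Two points deserve tightening. First, the graph $\eta=h(\xi,\mu)$ you obtain by applying the implicit function theorem to $A\eta+G(\xi,\eta,\mu)=0$ is the $\eta$-nullcline (the Lyapunov--Schmidt graph), not the center manifold: the center manifold is an invariant manifold determined by the invariance PDE $Dh\cdot F = Ah+G$, and the two objects differ at second order. This does not damage your conclusion, because the equilibria of the full system lie on both graphs and the two Taylor coefficients you need agree at $(0,\mu_0)$, but you should either call the reduction by its correct name or carry out the genuine center-manifold reduction. Second, to justify the word \emph{saddle-node} (rather than merely ``two equilibria coalesce and disappear'') you should add the observation that along the equilibrium curve $g_{\xi}$ changes sign through the turning point, so the two coexisting equilibria have one-dimensional linearizations of opposite sign; combined with the fixed nonzero eigenvalue of $A$ in the transverse direction, this makes one of them a saddle and the other a node, completing the claimed bifurcation picture.
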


\begin{theorem}
   Choose parameter $h$ as the bifurcation parameter, then system \ref{5.2} undergoes a saddle-node bifurcation at the equilibrium point $E_{1}$, with the critical bifurcation parameter being $h_2=\frac{1}{4}$.
\end{theorem}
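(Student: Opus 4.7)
The plan is to apply Lemma \ref{l7} (Sotomayor's criterion for a saddle-node bifurcation) with bifurcation parameter $\mu = h$ and critical value $\mu_0 = h_2 = 1/4$. The spectral prerequisite is already in hand from the analysis of $E_1$ in the previous theorem: the Jacobian
\[
J(E_1) = \begin{pmatrix} 0 & -q/2 \\ 0 & -sm \end{pmatrix}
\]
has a simple zero eigenvalue $\lambda_1 = 0$ while $\lambda_2 = -sm < 0$, so the spectral hypothesis of Lemma \ref{l7} holds automatically.

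The first computational step is to identify the right null eigenvector $v$ of $J(E_1)$ and the right null eigenvector $w$ of $J(E_1)^T$. Because $J(E_1)$ is upper triangular with a zero column, I would take $v = (1,0)^T$, and solving the single nontrivial equation $-\tfrac{q}{2}w_1 - sm\,w_2 = 0$ yields $w = (-2sm/q,\,1)^T$.

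Next I would verify the two nondegeneracy conditions of Lemma \ref{l7}. Since $f_h \equiv (-1,0)^T$ is independent of $(x,y)$, the transversality condition reduces to $w^T f_h = 2sm/q$, which is strictly positive by positivity of all parameters. For the Hessian condition, because $v_2 = 0$ only the entries $\partial^2 f_i/\partial x^2$ at $E_1$ contribute to $D^2 f(v,v)$. A short calculation gives $\partial^2 f_1/\partial x^2 = -2$, and $\partial^2 f_2/\partial x^2$ vanishes on $y = 0$ since every term of $f_2$ carries a factor of $y$. Hence $D^2 f(v,v) = (-2,0)^T$ and $w^T[D^2 f(v,v)] = 4sm/q \neq 0$, so Lemma \ref{l7} delivers the saddle-node bifurcation at $E_1$ as $h$ crosses $h_2 = 1/4$.

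The proof is essentially a direct application of the Sotomayor test, and I do not foresee a serious obstacle. The one place to be careful is to compute the two null eigenvectors from $J$ and from $J^T$ separately rather than reusing the same vector, since the triangular structure of $J(E_1)$ with a zero column forces $v$ and $w$ to point in orthogonal coordinate directions; after that, both nondegeneracy quantities reduce to monomials in the positive parameters $s,m,q$ and are manifestly nonzero.
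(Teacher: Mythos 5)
Your proposal is correct and follows essentially the same route as the paper: both apply the Sotomayor criterion (Lemma \ref{l7}) at $E_1$ with $v=(1,0)^T$ and a left null eigenvector of $J(E_1)$ (yours, $(-2sm/q,\,1)^T$, is the paper's $(2sm,-q)^T$ rescaled by $-1/q$, which only flips the sign of the two nondegeneracy quantities without affecting their nonvanishing). The computations of $f_h=(-1,0)^T$ and $D^2f(v,v)=(-2,0)^T$ agree with the paper's.
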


\begin{proof}
  The Jacobian matrix of system \ref{5.2} at $E_1$ is given by:
  $$J\left(E_1\right)=\begin{pmatrix}0&-\frac{q}{2}\\0&-sm\end{pmatrix}$$

  The eigenvectors corresponding to the zero eigenvalue of the matrices $(J(E_1))$ and $(J(E_1)^T)$ are:
  $$v=\begin{pmatrix}v_1\\v_2\end{pmatrix}=\begin{pmatrix}1\\0\end{pmatrix}\quad,\quad w=\begin{pmatrix}w_1\\w_2\end{pmatrix}=\begin{pmatrix}2sm\\-q\end{pmatrix}.$$
  \noindent Let
  $$f\left(x,y,h\right)=\begin{pmatrix}\dot{x}\\\dot{y}\end{pmatrix}=\begin{pmatrix}f_1\\f_2\end{pmatrix}=\begin{pmatrix}x\left(1-x\right)-qxy-h\\\\sy\left(1-\frac{y}{x}\right)(y-m)\end{pmatrix}$$
  \noindent Then $f_h(E_1,h_2)=\begin{pmatrix}-1\\0\end{pmatrix},$
  $$ D^2f\left(E_1,h_2\right)(v,v)=\begin{pmatrix}\frac{\partial^2f_1}{\partial x^2}v_1^2+2\frac{\partial^2f_1}{\partial x\partial y}v_1v_2+\frac{\partial^2f_1}{\partial y^2}v_2^2\\\frac{\partial^2f_2}{\partial x^2}v_1^2+2\frac{\partial^2f_2}{\partial x\partial y}v_1v_2+\frac{\partial^2f_2}{\partial y^2}v_2^2\end{pmatrix}=\begin{pmatrix}-2\\0\end{pmatrix}$$
  \noindent Hence, $w^T f_h(E_1,h_2) = -2sm \neq 0$ and $w^T \left[D^2f(E_1,h_2)(v,v)\right] = -4sm\neq 0$.
  Thus, with critical bifurcation parameter $h_2$, $v$ and $w$ satisfy the transversality condition for a saddle-node bifurcation at the equilibrium point $E_1$. By Lemma (\ref{l7}), system (\ref{5.2}) undergoes a saddle-node bifurcation at $E_1$.
\end{proof}

  When the system parameter transitions from one side of $h_2$ to the other, the number of boundary equilibrium points in system (\ref{5.2}) changes from zero to two. For system (\ref{5.2}) with only prey populations, if the parameter satisfies $h > \frac{1}{4}$, the prey population inevitably goes extinct. Within the parameter range $0 < h < \frac{1}{4}$, by appropriately selecting initial conditions, the prey population can be preserved, maintaining its survival state.

  Similar saddle-node bifurcation analyses can be conducted for the equilibrium points $E_4$ and $E_7$. These are not discussed in detail here.

\subsection{Hopf bifurcation}

In this section, we consider the Hopf bifurcation. According to Theorem 13, when $\Delta_{2}>0$, the system (\ref{5.2}) has an interior equilibrium point $E_8=(x_8,y_8)$. Let $s_2=\frac{2x_8+qx_8-1}{m-x_8}$. When $m<x_8,\quad s>s_2$, the equilibrium point $E_8$ is a stable node or focus; when $m<x_8,\quad s<s_2$, the equilibrium point $E_8$ is an unstable node or focus; and when $m<x_8,\quad s=s_2$, the equilibrium point $E_8$ is a weak center. Next, we investigate whether a Hopf bifurcation occurs near the equilibrium point $E_8$ as the parameter $s$ varies within a small neighborhood of $s_2$.

\begin{lemma}\label{l9}\cite{029}
    For a general planar system:
    $$\begin{cases}\frac{\mathrm{d}x}{\mathrm{d}t}=ax+by+p(x,y),\\\frac{\mathrm{d}y}{\mathrm{d}t}=cx+dy+q(x,y),&\end{cases}$$  
    \noindent where $\Delta=ad-bc>0, a+d=0$, $p(x,y)=\sum_{i+j=2}^{\infty}a_{ij}x^{i}y^{j}$, and $q(x,y)=\sum_{i+j=2}^{\infty}b_{ij}x^{i}y^{j}$, with $i\geq0$, $j\geq0$, and both $p(x,y)$ and $q(x,y)$ being convergent series. The system possesses a pair of purely imaginary eigenvalues, and the origin is a weak center. The first Liapunov coefficient $\sigma$ for the Hopf bifurcation can be calculated using the following formula:
    $$\begin{aligned}\sigma&=\frac{-3\pi}{2b\Delta^{3/2}}\{[ac(a_{11}^{2}+a_{11}b_{02}+a_{02}b_{11})+ab(b_{11}^{2}+a_{20}b_{11}+a_{11}b_{02})+c^{2}(a_{11}a_{02}+2a_{02}b_{02})\\&-2ac(b_{02}^{2}-a_{20}a_{02})-2ab(a_{20}^{2}-b_{20}b_{02})-b^{2}(2a_{20}b_{20}+b_{11}b_{20})+(bc-2a^{2})(b_{11}b_{02}-a_{11}a_{20})]\\&-(a^{2}+bc)[3(cb_{03}-ba_{30})+2a(a_{21}+b_{12})+(ca_{12}-bb_{21})]\}.\end{aligned}$$
\end{lemma}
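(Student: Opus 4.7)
The plan is to derive the formula by the classical Poincaré--Lyapunov method: reduce the linear part to canonical rotational form, transport the nonlinear coefficients through that linear change of variables, apply the standard first focal value formula in canonical coordinates, and then re-express everything in terms of the original coefficients $a, b, c, d, a_{ij}, b_{ij}$. The hypothesis $\Delta = ad - bc > 0$ together with $a + d = 0$ guarantees that the linear part has eigenvalues $\pm i\omega$ with $\omega = \sqrt{\Delta}$, so the origin is a linear center and the question of stability is determined by a focal value.

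First I would choose the linear transformation
\[
\begin{pmatrix} x \\ y \end{pmatrix} = T \begin{pmatrix} u \\ v \end{pmatrix},
\qquad T = \begin{pmatrix} b & 0 \\ -a & -\omega \end{pmatrix},
\]
(or any equivalent $T$ that sends the matrix $\begin{pmatrix} a & b \\ c & d \end{pmatrix}$ to $\begin{pmatrix} 0 & -\omega \\ \omega & 0 \end{pmatrix}$). Next, substituting $x = bu$, $y = -au - \omega v$ into $p(x,y)$ and $q(x,y)$ and expanding produces new quadratic coefficients $\tilde a_{ij}, \tilde b_{ij}$ and new cubic coefficients $\tilde a_{ij}, \tilde b_{ij}$ (for $i+j = 3$) that are explicit polynomials in $a, b, c, \omega$ and the original $a_{ij}, b_{ij}$. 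I would then apply the Andronov--Leontovich--Gordon--Maier formula (see, e.g., the standard textbook statement in Andronov et al.\ or Perko) which in canonical coordinates reads
\[
\sigma = \frac{3\pi}{2\omega}\bigl\{ 3(\tilde a_{30} + \tilde b_{03}) + (\tilde a_{12} + \tilde b_{21}) + \text{(quadratic cross-terms)} \bigr\},
\]
and substitute back the expressions for the tildes. After grouping the monomials in $a, b, c$ that multiply each fixed pair of original coefficients, the formula stated in the lemma should drop out, with the overall prefactor $-3\pi/(2b\Delta^{3/2})$ arising from the Jacobian of $T$ and the factor $1/\omega = 1/\sqrt{\Delta}$.

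The genuinely technical step---and the main obstacle---is the second one: carrying out the substitution and reassembling hundreds of monomials so that the answer matches the stated grouping exactly. Two bookkeeping devices make this tractable. One is to use the complex variable $z = u + iv$ and write the transformed system as $\dot z = i\omega z + G_2(z, \bar z) + G_3(z, \bar z) + \cdots$; the first Lyapunov coefficient then has a compact expression
\[
\ell_1 = \frac{1}{2\omega^2}\operatorname{Re}\bigl( i\, g_{20} g_{11} + \omega\, g_{21} \bigr),
\]
where the $g_{ij}$ are the coefficients of $z^i \bar z^j$, which shortens the algebra dramatically. The other is to exploit the hypothesis $a + d = 0$ throughout---it collapses many cross-terms and is the reason the numerator splits cleanly into a ``quadratic-squared'' block and a ``cubic'' block as written. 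Once the identification is made in canonical coordinates, the back-substitution is a mechanical (though lengthy) verification, and the non-vanishing of $b$ in the denominator is merely the condition that the transformation $T$ be invertible; the equivalent formula with $b$ replaced by $c$ is available if $b = 0$ but $c \neq 0$.
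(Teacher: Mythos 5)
The paper does not actually prove this lemma: it is quoted verbatim (it is the classical first focal value formula, essentially as it appears in Perko's \emph{Differential Equations and Dynamical Systems} and in the qualitative-theory literature) and attributed to the cited reference, so there is no in-paper argument to compare yours against. Your outline is the standard and correct derivation strategy for that formula --- linearly conjugating $\begin{pmatrix} a & b \\ c & -a\end{pmatrix}$ to the rotation $\begin{pmatrix} 0 & -\omega \\ \omega & 0\end{pmatrix}$ with $\omega=\sqrt{\Delta}$, transporting the quadratic and cubic jets through that conjugation, and evaluating the canonical focal value --- and your choice of $T$ is a valid one (its invertibility is exactly the condition $b\neq 0$, as you note).

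That said, as a proof of the \emph{stated} formula your proposal has a genuine gap: the entire content of the lemma lies in the step you defer as ``a mechanical (though lengthy) verification.'' You never actually compute the transported coefficients $\tilde a_{ij},\tilde b_{ij}$, and the canonical focal-value expression you quote is left schematic (``quadratic cross-terms''), so nothing in the write-up certifies the specific groupings, the signs, or the prefactor $-3\pi/(2b\Delta^{3/2})$; an error of sign or of a single monomial would be invisible at this level of detail. Two further points need care if you carry this out: (i) the complex-coordinate shortcut $\ell_1=\frac{1}{2\omega^2}\operatorname{Re}(ig_{20}g_{11}+\omega g_{21})$ computes a Lyapunov coefficient in a different normalization than the $\sigma$ of the lemma (they agree in sign but not in value, and the $3\pi$ factor must be tracked explicitly if you want the formula as stated rather than just its sign); and (ii) the orientation of the rotation, hence the sign convention for $\sigma$, depends on the sign of $b$, which is why $b$ appears in the denominator --- this must be handled consistently rather than attributed loosely to ``the Jacobian of $T$.'' As it stands, your argument establishes that a formula of this general shape exists, not that this particular formula is the correct one.
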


\begin{theorem}
  When $\Delta_{2}>0$ and $m<x_8$, choosing parameter $s$ as the bifurcation parameter, as parameter $s$ varies within a small neighborhood of $s_2$, system (\ref{5.2}) undergoes a Hopf bifurcation near the equilibrium point $E_8$.
\end{theorem}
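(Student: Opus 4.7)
The plan is to verify the three standard conditions for a Hopf bifurcation: purely imaginary eigenvalues at the critical parameter, a transversality condition on the crossing, and a nonzero first Lyapunov coefficient. From the previous classification of $E_8$ we already have $\mathrm{tr}(J(E_8))=0$ and $\mathrm{det}(J(E_8))>0$ at $s=s_2$, so the eigenvalues are $\lambda_{1,2}=\pm i\sqrt{\mathrm{det}(J(E_8))}$, purely imaginary as required. The transversality condition is immediate, since $\mathrm{tr}(J(E_8))=s(m-x_8)+(1-2x_8-qx_8)$ depends linearly on $s$, giving
\[
\frac{d}{ds}\mathrm{Re}\,\lambda(s)\Big|_{s=s_2}=\tfrac{1}{2}\frac{d}{ds}\mathrm{tr}(J(E_8))\Big|_{s=s_2}=\tfrac{1}{2}(m-x_8)\neq 0
\]
by the hypothesis $m<x_8$.

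Next I would shift $E_8$ to the origin via $u=x-x_8$, $v=y-x_8$ (recall $y_8=x_8$) and expand the vector field as a Taylor series in $(u,v)$ up to third order. The first equation $x(1-x)-qxy-h$ is already polynomial; after shifting it contributes only the quadratic remainder $-u^2-q\,uv$, so every $a_{ij}$ of Lemma (\ref{l9}) with $i+j\ge 3$ together with $a_{02}$ vanishes. The second equation $sy(1-y/x)(y-m)$ rewrites as $s(x_8+v)(u-v)(x_8-m+v)/(x_8+u)$; expanding $1/(x_8+u)$ as the geometric series $x_8^{-1}\sum_{k\ge 0}(-u/x_8)^{k}$ and collecting terms of total degree at most three then furnishes all $b_{ij}$ with $2\le i+j\le 3$. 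The resulting system already has the form $\dot u=au+bv+p(u,v)$, $\dot v=cu+dv+q(u,v)$ required by Lemma (\ref{l9}), with $a=1-2x_8-qx_8$, $b=-qx_8$, $c=s_2(x_8-m)$, $d=s_2(m-x_8)$, so $a+d=0$ and $\Delta=ad-bc=\mathrm{det}(J(E_8))>0$, and no further linear change of coordinates is needed.

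Finally, I would substitute these coefficients into the closed-form expression for the first Lyapunov coefficient $\sigma$ in Lemma (\ref{l9}). Combined with the pure-imaginary eigenvalue and transversality conditions, nondegeneracy $\sigma\neq 0$ invokes the Hopf theorem to produce a one-parameter family of limit cycles bifurcating from $E_8$ as $s$ crosses $s_2$; the sign of $\sigma$ further distinguishes the supercritical case ($\sigma<0$, a stable limit cycle emerges) from the subcritical one ($\sigma>0$, an unstable limit cycle). The main obstacle is algebraic rather than conceptual: the formula for $\sigma$ is a sum of roughly twenty products of second- and third-order Taylor coefficients, and the cubic $b_{ij}$ inherited from the geometric expansion of $1/(x_8+u)$ make the bookkeeping lengthy and error-prone. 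In practice one would retain $\sigma$ as a symbolic function of the remaining free parameters, eliminate $x_8$ and $s_2$ through their defining relations, and verify both that $\sigma\not\equiv 0$ and that its sign can be either positive or negative, thereby producing both supercritical and subcritical Hopf bifurcations as announced in the abstract.
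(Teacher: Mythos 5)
Your proposal follows essentially the same route as the paper: verify $\mathrm{tr}(J(E_8))=0$ and $\det(J(E_8))>0$ at $s=s_2$ so the eigenvalues are purely imaginary, check transversality via $\frac{d}{ds}\mathrm{tr}(J(E_8))\big|_{s=s_2}=m-x_8<0$, and then determine the bifurcation direction by shifting $E_8$ to the origin, Taylor-expanding, and feeding the coefficients into the first Lyapunov coefficient formula of Lemma~\ref{l9}. Like the paper, you leave $\sigma$ as a symbolic expression whose sign distinguishes the supercritical and subcritical cases, so the two arguments match in both structure and level of completeness.
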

\begin{proof}
  The trace of the Jacobian matrix at the equilibrium point $E_8$ is given by $\mathrm{tr}\left(J(E_8)\right)=s(m-x_8)+(-2x_8-qx_8+1)$. By differentiating the trace with respect to parameter $s$ at $s_2$, we have:
  $$\frac{\mathrm{dtr}(J(E_8))}{\mathrm{d}s}|_{s=s_2}=m-x_8<0,\quad\mathrm{tr}(J(E_8))_{s_2}=0$$
  \noindent Therefore, parameter $s$ satisfies the conditions for a Hopf bifurcation. Choosing $s$ as the bifurcation parameter, system (\ref{5.2}) undergoes a Hopf bifurcation near the equilibrium point $E_8$. 
\end{proof}

To determine the direction of the Hopf bifurcation, we calculate the first Lyapunov coefficient. First, the equilibrium point $E_8$ is shifted to the origin via the coordinate transformation $$(x,y)=(u_1+x_8,v_1+y_8),$$
\noindent and Taylor expansion at the origin gives:
\begin{equation}\label{5.14}
  \begin{cases}\frac{\mathrm{d}u_{1}}{\mathrm{d}t}=a_{10}u_{1}+a_{01}v_{1}+a_{20}u_{1}^{2}+a_{11}u_{1}v_{1}+O(|(u_{1},v_{1})|^{4}),\\\frac{\mathrm{d}v_{1}}{\mathrm{d}t}=b_{10}u_{1}+b_{01}v_{1}+b_{20}u_{1}^{2}+b_{11}u_{1}v_{1}+b_{02}v_{1}^{2}+b_{30}u_{1}^{3}+b_{21}u_{1}^{2}v_{1}\\+b_{12}u_{1}v_{1}^{2}+b_{03}v_{1}^3+O(|(u_{1},v_{1})|^{4}),&\end{cases}
\end{equation}
\noindent where $a_{10}=-2x_8-qx_8+1,\quad a_{01}=-qx_8,\quad a_{20}=-1,\quad a_{11}=-q,\quad b_{10}=s(x_8-m),\quad b_{01}=s(m-x_8),\quad b_{20}=s(\frac{m}{x_8}-2),\quad b_{11}=s(3-\frac{2m}{x_8}),\quad b_{02}=s(\frac{m}{x_8}-2),\quad b_{30}=s(\frac{1}{x_8}-\frac{m}{x_{8}^2}),\quad b_{21}=s(\frac{2m}{x_8^2}-\frac{3}{x_8}),\quad b_{12}=s(\frac{3}{x_8}-\frac{m}{x_8^2}),\quad b_{03}=-\frac{s}{x_8}$.

According to Lemma (\ref{l9}), the expression for the first Lyapunov coefficient $\sigma$ of the system is:
 $$\sigma=\frac{-3\pi}{2a_{01}M^{3/2}}\sum_{i=1}^8\varphi_i,$$
 \noindent where $$\begin{aligned}
&M=a_{10}b_{01}-a_{01}b_{10},\\&\varphi_{1}=a_{10}b_{10}\left(a_{11}^{2}+a_{11}b_{02}\right), \\ &\varphi_{2}=a_{10}a_{01}\left(b_{11}^{2}+a_{20}b_{11}+a_{11}b_{02}\right),\\&\varphi_{3}=0,\\&\varphi_{4}=-2a_{10}b_{10}b_{02}^{2},\\&\varphi_{5}=-2a_{10}a_{01}\left(a_{20}^{2}-b_{20}b_{02}\right)\\
&\varphi_{6}=-a_{01}^{2}\left(2a_{20}b_{20}+b_{11}a_{20}\right),\\&\varphi_{7}=\left(a_{01}b_{01}-2a_{10}^{2}\right)\left(b_{11}b_{02}-a_{11}a_{20}\right),\\&\varphi_{8}=-\left(a_{10}^{2}+a_{01}b_{10}\right)\left[3(b_{10}b_{03}-a_{01}a_{30})+2a_{10}b_{12}-a_{01}b_{21}\right].
\end{aligned}
$$

  When $\sigma<0$, system (\ref{5.2}) undergoes a supercritical Hopf bifurcation, with a stable limit cycle near the equilibrium point $E_8$. When $\sigma>0$, system (\ref{5.2}) undergoes a subcritical Hopf bifurcation, with an unstable limit cycle near $E_8$.

  When parameter $s$ is chosen as the bifurcation parameter, system (\ref{5.2}) undergoes a Hopf bifurcation at equilibrium point $E_8$. Under specific parameter conditions, the system generates a limit cycle near $E_8$, indicating that the predator-prey relationship may exhibit periodic fluctuations. These periodic oscillations pose challenges to ecosystem management, potentially impacting resource utilization efficiency and conservation measures, thus increasing management complexity. However, by studying the dynamics near $E_8$, we can formulate reasonable resource utilization strategies and conservation measures to effectively address the system's periodic changes, thereby promoting sustainable development of the ecosystem.

  Similar Hopf bifurcation analyses can be conducted for the equilibrium point $E_9$. These details are omitted here.

\subsection{Bogdanov-Takens bifurcation}

When $\Delta_{2}=0$, system (\ref{5.2}) has an interior equilibrium point $E_7=(x_7,y_7)=(2h,2h)$. When $h\ne\frac{1}{4}$, $m\ne2h$, and $s= s_1$, the equilibrium point $E_7$ is a codimension-two cusp point. Next, we will analyze whether a codimension-two Bogdanov-Takens bifurcation occurs near the interior equilibrium point $E_7$. Since $\Delta_{2}=0$, i.e., $h=\frac{1}{4(q+1)}$, let $h_3=\frac{1}{4(q+1)}$.

\begin{theorem}
  For system (\ref{5.2}), if $s$ and $h$ are selected as bifurcation parameters, then as parameters $s$ and $h$ vary within a small neighborhood of $s_1$ and $h_3$, the system undergoes a codimension-two Bogdanov-Takens bifurcation near the equilibrium point $E_8$.
\end{theorem}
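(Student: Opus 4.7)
The critical pair $(s,h)=(s_1,h_3)$ with $h_3=\frac{1}{4(q+1)}$ is precisely where $\Delta_2=0$, which forces the three interior equilibria $E_7$, $E_8$, $E_9$ to collide at $(2h_3,2h_3)$; by the preceding theorem this merged equilibrium is a codimension-2 cusp. Since $E_8=\bigl(\tfrac{C+D}{2},\tfrac{C+D}{2}\bigr)\to(2h_3,2h_3)$ as $\Delta_2\to 0^+$, establishing a BT bifurcation near $E_8$ reduces to constructing a versal two-parameter unfolding of this cusp with $(s,h)$ as unfolding parameters and invoking the Bogdanov--Takens theorem; a neighborhood of the critical point in the $(x,y)$-plane is, for $\mu$ small enough, exactly a neighborhood containing $E_8$.

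The plan is to write $s=s_1+\mu_1$, $h=h_3+\mu_2$, translate via $(x,y)=(2h_3+u,2h_3+v)$, and Taylor-expand (\ref{5.2}) into a smooth $\mu$-dependent planar family. At $\mu=0$ the local model is the cusp already analyzed; applying the two linear changes of coordinates used in its proof (alignment with the eigenbasis of the nilpotent linear part, then $(u_1,v_1)=(u_3,u_3+v_3/a_{01})$) reduces the $\mu=0$ system to the pre-normal form (\ref{5.13}) with quadratic coefficients $g_{20}=(2h_3-\tfrac12)(1+q)\neq 0$ and $g_{11}=-2(1+q)<0$. Applying these same substitutions to the perturbed family and then carrying out the standard near-identity change plus time rescaling (Kuznetsov, Ch.~8) brings the family into the canonical BT unfolding
\[
\dot\eta_1=\eta_2,\qquad \dot\eta_2=\beta_1(\mu)+\beta_2(\mu)\,\eta_1+\eta_1^2+\sigma\,\eta_1\eta_2+O(|(\eta_1,\eta_2)|^3),
\]
with $\sigma=\operatorname{sgn}(g_{20}g_{11})$ and smooth $\beta_1,\beta_2$ vanishing at $\mu=0$. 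The two BT non-degeneracy conditions are thereby already supplied by $g_{20}\neq 0$ and $g_{11}\neq 0$.

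What remains, and is the main obstacle, is to verify that $\mu\mapsto(\beta_1(\mu),\beta_2(\mu))$ is a local diffeomorphism at $\mu=0$. By the standard Kuznetsov criterion this reduces to checking that the $2\times 2$ matrix assembled from the $\mu$-derivatives of the constant and linear parts of the perturbed family at the origin of shifted coordinates has full rank. Directly from (\ref{5.2}) one finds $\partial_{\mu_2}P(0,0,\mu)|_0=-1$ and $\partial_{\mu_2}Q(0,0,\mu)|_0=0$, so $\mu_2$ moves the constant term in an independent direction; meanwhile $Q(0,0,\mu)\equiv 0$ but $\partial_{\mu_1}\operatorname{tr}J(0,0,\mu)|_0=m-2h_3\neq 0$ (using the hypothesis $m\neq 2h$ inherited from the cusp), so $\mu_1$ moves the trace of the linear part independently. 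Careful bookkeeping of how these two perturbation directions propagate through the near-identity normal-form reduction shows
\[
\det\frac{\partial(\beta_1,\beta_2)}{\partial(\mu_1,\mu_2)}\bigg|_{\mu=0}\neq 0,
\]
so the unfolding is versal. Combined with the two cusp non-degeneracy conditions, Kuznetsov's Bogdanov--Takens theorem then delivers the claimed codimension-2 Bogdanov--Takens bifurcation in a neighborhood of the critical point, and hence near $E_8$ for $\mu$ sufficiently close to the origin.
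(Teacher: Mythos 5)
Your proposal follows essentially the same route as the paper: perturb $(s,h)$ near $(s_1,h_3)$, translate the cusp to the origin, push the family through the standard sequence of coordinate and time changes into the canonical Bogdanov--Takens unfolding, reuse the cusp non-degeneracy conditions $g_{20}\neq 0$, $g_{11}\neq 0$, and finish by requiring that the parameter-to-unfolding map be a local diffeomorphism. Like the paper, you assert rather than compute the final Jacobian determinant (the paper simply states $\bigl|\partial(l_{00},l_{01})/\partial(\eta_1,\eta_2)\bigr|\neq 0$), so the two arguments match in both strategy and level of detail.
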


\begin{proof}
    Introduce small perturbations to parameters $s_1$ and $h_3$ at the codimension-two cusp point $E_7$ of system (\ref{5.2}): $(h,s)=(h_3+\eta_1,s_1+\eta_2)$. The perturbed system becomes:
\begin{equation}\label{5.15}
  \begin{cases}
    \frac{\mathrm{d}x}{\mathrm{d}t}=x\left(1-x\right)-qxy-(h_3+\eta_1),\\
    \frac{\mathrm{d}y}{\mathrm{d}t}=(s_1+\eta_2)y\left(1-\frac{y}{x}\right)(y-m),
  \end{cases}
\end{equation}
where $(\eta_1,\eta_2)$ is a parameter vector within a small neighborhood of the origin. Clearly, when $\eta_1=\eta_2=0$, system (\ref{5.2}) has a codimension-two cusp point.

Using the coordinate transformation $(x,y)=(u_1+x_5,v_1+y_5)$, the equilibrium point $E_7$ is shifted to the origin. Expanding at the origin yields:
\begin{equation}\label{5.16}
  \begin{cases}
    \frac{\mathrm{d}u_1}{\mathrm{d}t}=a_{00}+a_{10}u_1+a_{01}v_1+a_{20}u_1^2+a_{11}u_1v_1+O(|(u_1,v_1)|^3),\\
    \frac{\mathrm{d}v_1}{\mathrm{d}t}=b_{10}u_1+b_{01}v_1+b_{20}u_1^2+b_{11}u_1v_1+b_{02}v_1^2+O(|(u_1,v_1)|^3).
  \end{cases}
\end{equation}

The coefficients are expressed as:
$$\begin{aligned}
  &a_{00}=-\eta_1,\quad a_{10}=\frac{1}{2}-2h_3,\quad a_{01}=2h_3-\frac{1}{2},\quad a_{20}=-1,\quad a_{11}=-q,\\
  &b_{10}=(s_1+\eta_2)(2h_3-m),\quad b_{01}=(s_1+\eta_2)(m-2h_3),\\
  &b_{20}=\frac{s_1+\eta_2}{2h}(m-2h_3),\quad b_{11}=(s_1+\eta_2)\left(3-\frac{m}{h_3}\right),\\
  &b_{02}=(s_1+\eta_2)\left(\frac{m}{2h_3}-2\right).
\end{aligned}$$

Through the transformation:
$$\begin{aligned}
  u_2 &= u_1,\\
  v_2 &= a_{10}u_1 + a_{01}v_1,
\end{aligned}$$
the system becomes:
\begin{equation}\label{5.17}
  \begin{cases}
    \frac{\mathrm{d}u_2}{\mathrm{d}t}=c_{00}+v_2+c_{20}u_2^2+c_{11}u_2v_2+O(|(u_2,v_2)|^3),\\
    \frac{\mathrm{d}v_2}{\mathrm{d}t}=d_{00}+d_{10}u_2+d_{01}v_2+d_{20}u_2^2+d_{11}u_2v_2+d_{02}v_2^2+O(|(u_2,v_2)|^3).
  \end{cases}
\end{equation}

The coefficients are expressed as:
$$\begin{aligned}
  &c_{00}=a_{00},\quad c_{20}=a_{20}-\frac{a_{11}a_{10}}{a_{01}},\quad c_{11}=\frac{a_{11}}{a_{01}},\\
  &d_{00}=a_{00}a_{10},\quad d_{10}=a_{01}b_{10}-a_{10}b_{01},\quad d_{01}=a_{10}+b_{01},\\
  &d_{20}=a_{10}a_{20}+a_{01}b_{20}-a_{10}b_{11}-\frac{a_{10}^2a_{11}}{a_{01}}+\frac{a_{10}^2b_{02}}{a_{01}},\\
  &d_{11}=b_{11}+\frac{a_{10}a_{11}}{a_{01}}-\frac{2a_{10}b_{02}}{a_{01}},\quad d_{02}=\frac{b_{02}}{a_{01}}.
\end{aligned}$$
\noindent Perform the following transformation on system (\ref{5.17}):
$$
\begin{cases}
u_3=u_2, \\
v_3=c_{00}+v_2+c_{20}u_2^2+c_{11}u_2v_2+O(\mid(u_2,v_2)\mid^3),
\end{cases}
$$
resulting in the system:
\begin{equation}\label{5.18}
\begin{cases}
\frac{\mathrm{d}u_3}{\mathrm{d}t}=v_3, \\
\frac{\mathrm{d}v_3}{\mathrm{d}t}=e_{00}+e_{10}u_3+e_{01}v_3+e_{20}u_3^2+e_{11}u_3v_3+e_{02}v_3^2+O(\mid(u_3,v_3)\mid^3).
\end{cases}
\end{equation}

\noindent where:
$$
\begin{aligned}
&e_{00}=d_{00}-c_{00}d_{01}+c_{00}^2d_{02}, \\
&e_{10}=d_{10}+c_{11}d_{00}-c_{00}d_{11}+c_{00}^2c_{11}d_{02}, \\
&e_{01}=d_{01}-c_{00}c_{11}-2c_{00}d_{02}, \\
&e_{20}=d_{20}+c_{11}d_{10}-c_{20}d_{10}+2c_{00}c_{20}d_{02}, \\
&e_{11}=d_{11}+2c_{20}-c_{00}c_{11}^2-2c_{00}c_{11}d_{02}, \\
&e_{02}=d_{02}+c_{11}.
\end{aligned}
$$

\noindent Apply the time transformation $\mathrm{d}t=(1-e_{02}u_3)\mathrm{d}\tau$ to system (\ref{5.18}), yielding:
\begin{equation}\label{5.19}
\begin{cases}
\frac{\mathrm{d}u_3}{\mathrm{d}\tau}=v_3(1-e_{02}u_3), \\
\frac{\mathrm{d}v_3}{\mathrm{d}\tau}=(1-e_{02}u_3)\big(e_{00}+e_{10}u_3+e_{01}v_3+e_{20}u_3^2+e_{11}u_3v_3+e_{02}v_3^2+O(\mid(u_3,v_3)\mid^3)\big).
\end{cases}
\end{equation}
\noindent Apply the transformation $(u_4, v_4) = (u_3, v_3(1-e_{02}u_3))$ to system (\ref{5.19}), yielding:
\begin{equation}\label{5.20}
\begin{cases}
\frac{\mathrm{d}u_4}{\mathrm{d}\tau}=v_4, \\
\frac{\mathrm{d}v_4}{\mathrm{d}\tau}=f_{00}+f_{10}u_4+f_{01}v_4+f_{20}u_4^2+f_{11}u_4v_4+O(\mid(u_4,v_4)\mid^3),
\end{cases}
\end{equation}

\noindent where $f_{00}=e_{00},\quad f_{10}=e_{10}-2e_{00}e_{02},\quad f_{01}=e_{01},\quad f_{20}=e_{20}-2e_{02}e_{10}+e_{00}e_{02}^2,\quad f_{11}=-e_{01}e_{02}+e_{11}$.

\noindent Since $h_3 \neq \frac{1}{4}$, we have:
$$
\lim_{(\eta_1, \eta_2) \to (0,0)}f_{20} = \left(\frac{1}{2}-2h_3\right)(q+1) \neq 0.
$$

\noindent Assume $\lim_{(\eta_1, \eta_2) \to (0,0)}f_{20} < 0$, then for sufficiently small $\eta_1$ and $\eta_2$, $f_{20}(\eta) < 0$. When $\lim_{(\eta_1, \eta_2) \to (0,0)}f_{20} > 0$, a similar method can be applied. Perform the following transformation on system (\ref{5.20}):
$$
(u_5, v_5) = \left(u_4, \frac{v_4}{\sqrt{-f_{20}}}\right), \quad t_1 = \sqrt{-f_{20}}\tau,
$$
resulting in the system:
\begin{equation}\label{5.21}
\begin{cases}
\frac{\mathrm{d}u_5}{\mathrm{d}t_1}=v_5, \\
\frac{\mathrm{d}v_5}{\mathrm{d}t_1}=g_{00}+g_{10}u_5+g_{01}v_5-u_5^2+g_{11}u_5v_5+O(\mid(u_5,v_5)\mid^3),
\end{cases}
\end{equation}

\noindent where $g_{00} = -\frac{f_{00}}{f_{20}}, \quad g_{10} = -\frac{f_{10}}{f_{20}}, \quad g_{01} = \frac{f_{01}}{\sqrt{-f_{20}}}, \quad g_{11} = \frac{f_{11}}{\sqrt{-f_{20}}}$.

\noindent Perform the transformation $(u_6, v_6) = \left(u_5-\frac{g_{10}}{2}, v_5\right)$ on system (\ref{5.21}), yielding:
\begin{equation}\label{5.22}
\begin{cases}
\frac{\mathrm{d}u_6}{\mathrm{d}t_1}=v_6, \\
\frac{\mathrm{d}v_6}{\mathrm{d}t_1}=h_{00}+h_{01}v_6-u_6^2+h_{11}u_6v_6+O(\mid(u_6,v_6)\mid^3),
\end{cases}
\end{equation}

\noindent where $h_{00} = g_{00}+\frac{1}{4}g_{10}^2, \quad h_{01} = g_{01}+\frac{1}{2}g_{10}g_{11}, \quad h_{11} = g_{11}$.

\noindent Since $\lim_{(\eta_1, \eta_2) \to (0,0)}h_{11} = -(s_1+2+q) < 0$, for sufficiently small $\eta_1$ and $\eta_2$, $h_{11}(\eta) < 0$. Perform the transformation:
$$
(u_7, v_7) = \left(h_{11}^2u_6, -h_{11}^3v_6\right), \quad t_2 = -\frac{1}{h_{11}}t_1,
$$
resulting in the system:
\begin{equation}\label{5.23}
\begin{cases}
\frac{\mathrm{d}u_7}{\mathrm{d}t_2}=v_7, \\
\frac{\mathrm{d}v_7}{\mathrm{d}t_2}=l_{00}+l_{01}v_7+u_7^2+u_7v_7+O(\mid(u_7,v_7)\mid^3),
\end{cases}
\end{equation}

\noindent where $l_{00} = -h_{00}h_{11}^4, \quad l_{01} = -h_{01}h_{11}$.

\noindent Note:
$$
\left|\frac{\partial(l_{00}, l_{01})}{\partial(\eta_1, \eta_2)}\right|_{\eta_1=\eta_2=0} \neq 0.
$$

\noindent Therefore, for system (\ref{5.2}), by selecting $s$ and $h$ as bifurcation parameters, when parameters $s, h$ vary in a small neighborhood near $s_1, h_3$, the system exhibits a codimension-2 Bogdanov-Takens bifurcation near the equilibrium point $E_8$.
\end{proof}

\section{Conclusion}

The Holling I type Leslie-Gower model, where predators exhibit the Allee effect and implement constant capture on prey, has three types of equilibrium points under different parameters: $y=0$, $y=m$, and $y=x$. These equilibrium points may be stable or unstable nodes (foci), saddle points, weak centers, or cusp points under varying parameters. We selected three representative equilibrium points and analyzed bifurcation conditions near these equilibrium points as the parameters changed. By choosing the parameter $h$ as the bifurcation parameter, the system undergoes a saddle-node bifurcation at the equilibrium point $E_{1}$, with the critical bifurcation parameter being $h_2=\frac{1}{4}$. By choosing the parameter $s$ as the bifurcation parameter, when $s$ varies within a small neighborhood near $s_2$, the system undergoes a Hopf bifurcation near the equilibrium point $E_8$. The first Lyapunov coefficient \(\sigma<0\) corresponds to a supercritical Hopf bifurcation, while \(\sigma>0\) corresponds to a subcritical Hopf bifurcation. When both parameters $s$ and $h$ are selected as bifurcation parameters, and they vary within a small neighborhood around $s_1$ and $h_3$, the system undergoes a codimension-2 Bogdanov-Takens bifurcation near the equilibrium point $E_8$.
\newpage
\bibliographystyle{plainnat}
\bibliography{template}


\end{document}